\documentclass{article}

% if you need to pass options to natbib, use, e.g.:
\PassOptionsToPackage{numbers, compress}{natbib}
% before loading neurips_2025

% ready for submission
%\usepackage{neurips_2025}

% to compile a preprint version, e.g., for submission to arXiv, add add the
% [preprint] option:
%     \usepackage[preprint]{neurips_2025}

% to compile a camera-ready version, add the [final] option, e.g.:
\usepackage[final]{neurips_2025}

% to avoid loading the natbib package, add option nonatbib:
%    \usepackage[nonatbib]{neurips_2025}

\usepackage[utf8]{inputenc} % allow utf-8 input
\usepackage[T1]{fontenc}    % use 8-bit T1 fonts
\usepackage{hyperref}       % hyperlinks
\usepackage{url}            % simple URL typesetting
\usepackage{booktabs}       % professional-quality tables
\usepackage{amsfonts}       % blackboard math symbols
\usepackage{nicefrac}       % compact symbols for 1/2, etc.
\usepackage{microtype}      % microtypography
\usepackage{xcolor}         % colors

\usepackage{amssymb,amsmath,amsthm}
\usepackage{mathtools}
\usepackage{stmaryrd}
\usepackage{bbm,bm}

\usepackage{algorithm}
\usepackage{algorithmicx}
\usepackage{algpseudocode}
  % Use Input in the format of Algorithm
 % Use Output in the format of Algorithm

\usepackage{thmtools}
\declaretheorem{theorem}

\declaretheorem{lemma}
\declaretheorem{proposition}

\declaretheorem{fact}
\declaretheorem{claim}
% \declaretheorem{algorithm}

\declaretheoremstyle[qed=$\square$]{definitionwithend}
\declaretheorem{definition}
\declaretheorem{assumption}
\declaretheorem[style=definitionwithend]{example}
\declaretheorem{remark}

\usepackage{graphicx}
\usepackage{subfigure}
\usepackage{parskip}
\usepackage{booktabs}
\usepackage{url}
\usepackage{hyperref}
\hypersetup{
	colorlinks=true,
	linkcolor=blue,
	filecolor=blue,
	anchorcolor=blue,
	urlcolor=cyan,
	citecolor=purple
}
\usepackage{enumerate}
\usepackage[shortlabels]{enumitem}
\usepackage{verbatim}
\usepackage{booktabs}       % professional-quality tables
\usepackage{multirow}

% Links, references
\usepackage{url}            % simple URL typesetting

%Sets

\let\emptyset\varnothing

% Font Shortcuts

\usepackage{mathrsfs}
%Letters in different typefaces
%\newcommand{\A}{\mathbb{A}}

\def\B{{\mathbb{B}}}

\def\N{{\mathbb{N}}}

\def\P{{\mathbb{P}}}

\def\R{{\mathbb{R}}}

\def\bA{{\mathbf{A}}}

\def\cA{{\cal A}}

\def\cD{{\cal D}}

\def\cO{{\cal O}}

\def\cS{{\cal S}}

\def\cV{{\cal V}}

\def\cX{{\cal X}}
\def\cY{{\cal Y}}

\def\s{{\bm s}}

\def\u{{\bm u}}
\def\v{{\bm v}}

\def\x{{\bm x}}
\def\y{{\bm y}}
\def\z{{\bm z}}
\def\bz{{\bm 0}}

\def\1{{\bm 1}}

\newcommand{\dist}{\operatorname{dist}}

% \def\theequation{\arabic{equation}}
% \def\pn{\par\smallskip\noindent}
% \def\sn{{\rm sign}}
% \def \bn {\hfill \\ \smallskip\noindent}
% \newcommand{\nnub}{\nonumber}

% %\newcommand{\mysection}{\setcounter{equation}{0}\section}
% \font\bigtenrm=cmr10 scaled \magstep2

% %\def \Box {\rule height5pt width5pt depth0pt}
% \def\smskip{\par\vskip 5 pt}
% \def\proof {\pn {Proof.} }
% \def\dt{{\rm dist}}
% \def\QED{\hfill \quad{\bf Q.E.D.}\smskip}
% \def\endproof{\hfill $\Box$ \vskip .5cm}
%\def\mones{{\mathbf{1}}}

% \setlength{\parindent}{20pt}
\usepackage{mdframed} 
%Optimization basics

\DeclareMathOperator*{\argmin}{arg\,min}

%Matrix related
%\DeclareMathOperator{\rank}{rank}

\def\prox{{\operatorname{prox}}}

%Probability and distributions
\DeclareMathOperator*{\E}{\mathbb{E}}

%Topology related
% \DeclareMathOperator{\dim}{dim}

%Function related
\DeclareMathOperator{\dom}{dom}

%Convexity related

\usepackage{soul}

\title{
	%Computing Clarke Hyper-stationarity in Bilevel Optimization via Set Smoothness
	Set Smoothness Unlocks Clarke Hyper-stationarity\\  in Bilevel Optimization
}

%\title{Can Clarke Hyper-stationarity be Computed in Bilevel Optimization?}

% The \author macro works with any number of authors. There are two commands
% used to separate the names and addresses of multiple authors: \And and \AND.
%
% Using \And between authors leaves it to LaTeX to determine where to break the
% lines. Using \AND forces a line break at that point. So, if LaTeX puts 3 of 4
% authors names on the first line, and the last on the second line, try using
% \AND instead of \And before the third author name.

\author{%
	He Chen\\
	SEEM\\
	The Chinese University of Hong Kong\\
	Shatin, Hong Kong \\
	\texttt{hchen@se.cuhk.edu.hk} \\
	% examples of more authors
	\And
	Jiajin Li\\
	Sauder School of Business\\
	University of British Columbia\\
	Vancouver, BC, Canada\\
	\texttt{jiajin.li@sauder.ubc.ca} \\
	\AND
	Anthony Man-Cho So\\
	SEEM\\
	The Chinese University of Hong Kong\\
	Shatin, Hong Kong \\
	\texttt{manchoso@se.cuhk.edu.hk} \\
	% \And
	% Coauthor \\
	% Affiliation \\
	% Address \\
	% \texttt{email} \\
	% \And
	% Coauthor \\
	% Affiliation \\
	% Address \\
	% \texttt{email} \\
}

\begin{document}

	\maketitle

	\begin{abstract}
		Solving bilevel optimization (BLO) problems to global optimality is generally intractable. A common surrogate is to compute a hyper-stationary point—a stationary point of the hyper-objective function obtained by minimizing or maximizing the upper-level objective over the lower-level solution set.
		Existing methods, however, either provide weak notions of stationarity or require restrictive assumptions to guarantee the smoothness of hyper-objective functions. In this paper, we eliminate these impractical assumptions and show that strong (Clarke) hyper-stationarity remains computable even when the hyper-objective is nonsmooth.
		Our key ingredient is a new structural property, called \emph{set smoothness}, which captures the variational dependence of the lower-level solution set on the upper-level variable. We prove that this property holds for a broad class of BLO problems and ensures weak convexity (resp. concavity) of pessimistic (resp. optimistic) hyper-objective functions. 
		Building on this foundation, we show that a zeroth-order algorithm that computes approximate Clarke hyper-stationary points with non-asymptotic convergence guarantees. To the best of our knowledge, this is the first computational guarantee for Clarke-type stationarity in nonsmooth BLO.  Beyond this specific application, the set smoothness property emerges as a structural concept of independent interest, with potential to inform the analysis of broader classes of optimization and variational problems.

	\end{abstract}

	\section{Introduction}
	Bilevel optimization (BLO) models hierarchical decision-making with two agents acting sequentially \cite{dempe2002foundations,dempe2020bilevel}. The follower responds to the leader’s decision by solving a lower-level optimization problem, while the leader seeks an optimal strategy to minimize its upper-level objective subject to this reaction. The follower’s attitude plays a central role: If the follower is favorable (resp. adverse) to the leader, the resulting BLO is termed optimistic (resp. pessimistic) \cite{dempe2002foundations,wiesemann2013pessimistic,liu2018pessimistic}. Formally, the optimistic and pessimistic BLO take the following forms:
	
	\begin{minipage}{0.5\textwidth}
		\vspace{0.5em}
		\quad\textit{Optimistic BLO:}
		\begin{equation*}
			\begin{array}{cl}
				\min \limits_{\x\in\R^m}\min\limits_{\y\in\R^n}&\  F(\x,\y)\\ \text { subject to } 
				& \y\in\underset{\y^{\prime}\in\R^n}{\argmin}\ f(\x,\y^{\prime}), \\
			\end{array}   
		\end{equation*}
		\vspace{0.5em}
	\end{minipage}%
	\Bigg|
	\begin{minipage}{0.5\textwidth}
		\vspace{0.5em}
		\quad\textit{Pessimistic BLO:}
		\begin{equation*}\label{eq:BLO}
			\begin{array}{cl}
				\min \limits_{\x\in\R^m}\ \max\limits_{\y\in\R^n}&\  F(\x,\y) \\ \text { subject to } 
				&\y\in\underset{\y^{\prime}\in\R^n}{\argmin}\ f(\x,\y^{\prime}). \\
			\end{array}   
		\end{equation*}
		\vspace{0.5em}
	\end{minipage}
	These formulations appear in diverse domains such as Stackelberg games \cite{dempe2020bilevel,bruckner2011stackelberg,wang2021fast}, hyperparameter optimization \cite{franceschi2017forward,bertrand2020implicit,chen2024lower}, reinforcement learning \cite{konda2003onactor,zeng2024two,hong2023two}, and interdiction games \cite{liu2018pessimistic,caprara2016bilevel}, among others. A standard approach to tackle such nested problems is to reformulate them into single-level problems via \emph{hyper-objective functions}.
Let $\cS(\x)\coloneqq{\argmin}_{\y^{\prime}\in\R^n}\ f(\x,\y^{\prime})$ denote the follower’s optimal response set.
The \emph{optimistic} and \emph{pessimistic} hyper-objectives are then defined as  
	% BLO of these forms arise in various fields including Stackelberg games \cite{dempe2020bilevel,bruckner2011stackelberg,wang2021fast}, hyper-parameter optimization \cite{franceschi2017forward,bertrand2020implicit,chen2024lower}, reinforcement
	% learning \cite{konda2003onactor,zeng2024two,hong2023two}, and interdiction games \cite{liu2018pessimistic,caprara2016bilevel}, just to mention a few. A classical approach to solving such nested optimization problems is to reformulate them into single-level problems via so-called hyper-objective functions. Specifically, let $\cS(\x)\coloneqq{\argmin}_{\y^{\prime}\in\R^n}\ f(\x,\y^{\prime})$ and define the optimistic and pessimistic hyper-objective functions by
	\begin{equation}\label{eq:varphi}
		\varphi_{o}(\x)\coloneqq \min\limits_{\y\in \cS(\x)}\ F(\x,\y),
\qquad
		\varphi_{p}(\x)\coloneqq \max\limits_{\y\in \cS(\x)}\ F(\x,\y).
	\end{equation}
Solving an optimistic (resp. pessimistic) BLO is therefore equivalent to minimizing the corresponding hyper-objective $\varphi_o$ (resp. $\varphi_p$). 

Despite the single-level reformulation, the resulting hyper-objective functions are highly nonconvex \cite{chen2024finding,kwon2023penalty}, which makes global optimization intractable.
In practice, researchers therefore focus on finding stationary points rather than global minimizers, using algorithms such as implicit gradient descent \cite{franceschi2017forward,franceschi2018bilevel,bertrand2020implicit} and fully first-order methods \cite{kwon2023fully,hong2023two}.  These approaches assume that the lower-level problem is strongly convex, ensuring a unique solution, i.e., $\cS(\x)\coloneqq\{\y^\star(\x)\}$. Under this assumption, the hyper-objective reduces to a smooth function $\varphi(\x) \coloneqq F(\x, \y^\star(\x))$ \cite{ghadimi2018approximation}. One can then seek an $\epsilon$-approximate hyper-stationary point satisfying $\|\nabla \varphi(\x)\| \leq \epsilon$. Convergence is well understood under smoothness and uniqueness assumptions \cite{ghadimi2018approximation}, but these conditions rarely hold in practice.
With multiple lower-level solutions, the existing methods break down.

% These approaches often rely on the strong convexity of the lower-level problem to ensure a unique solution, i.e., $\cS(\x)=\{\y^{\star}(\x)\}$. Consequently, the hyper-objective reduces to the smooth function $\varphi(\x) := F(\x, \y^{\star}(\x))$ \cite{ghadimi2018approximation}. In this setting, one may seek an $\epsilon$-approximate hyper-stationary point $\x \in \R^m$ satisfying $\|\nabla \varphi(\x)\| \leq \epsilon$. Although convergence guarantees are available, these algorithms critically rely on the fact that the lower-level solution set is a singleton, limiting their applicability.
	
	% Consequently, the hyper-objective functions simplify to $\varphi_o(\x)=\varphi_{p}(\x)=F(\x,\y^{\star}(\x))$, leading to their smoothness \cite{ghadimi2018approximation}. Under this setting, an $\epsilon$-approximate hyper-stationary point, i.e., a point $\x\in\R^m$ satisfying $\| \nabla_{\x}F(\x,\y^{\star}(\x))\|\leq\epsilon$, can be computed. Although convergence rates have been established, these algorithms crucially rely on the fact that the lower-level solution set is a singleton, which limits  their applicability. 
	
To move beyond the singleton lower-level solution set, \citet{kwon2023fully} introduced a penalty-based framework that allows multiple follower solutions. Building on this idea, \citet{chen2024finding} obtained a refined scheme with near-optimal convergence. However, ensuring smoothness of the induced hyper-objective still demands strong regularity: The penalized model function  $h_{\sigma}(\x,\y)\coloneqq\sigma F(\x,\y)+f(\x,\y)$ must satisfy, uniformly in  $\sigma \in [0, \bar\sigma]$, an error bound or a Polyak–\L{}ojasiewicz (P\L{}) condition in
$\y$. Such requirements are often unrealistic in practice, as $F$ and $f$ typically have mismatched structures. More fundamentally, the Kurdyka–\L{}ojasiewicz (K\L{}) exponent is not preserved under summation \citep{jiang2022holderian}, so smoothness of 
hyper-objective functions cannot be guaranteed.

Without relying on these stringent conditions, \citet{chen2023bilevel} and \citet{khanduri2025doubly} proposed algorithms for nonsmooth hyper-objectives; however, by their zero-respecting nature (cf. \citep[Thm.~3.2]{chen2024finding}), they cannot in general approximate hyper-stationary points and thus only guarantee convergence to (approximate) Goldstein stationary points \citep{goldstein1977optimization}—a relatively weak notion. 
By contrast, a separate line of work studies alternative stationarity concepts via reformulations \citep{liu2022bome,xiao2023alternating,lu2023slm,abolfazli2025perturbed,lu2024first}; yet these notions (e.g., KKT stationarity \citep[Sec.~2.1]{liu2023averaged} and penalization stationarity \citep[Sec.~4.2]{yao2024overcoming}) are posed jointly in $(\x,\y)$ and do not ensure that, for a stationary pair $(\bar{\x},\bar{\y})$, the lower-level solution $\bar\y$ actually minimizes or maximizes $F(\bar\x, \y)$ over $\cS(\bar\x)$.

Given the above discussion, existing algorithms either fail to approximate a meaningful hyper-stationary point or rely on stringent assumptions to do so.
This naturally leads to a fundamental question:
	\begin{center}
		\bf Can strong hyper-stationarity be computed in general settings \\where multiple lower-level solutions exist?
	\end{center}

Addressing this question is challenging for a simple reason chain. 
When the lower level admits multiple solutions, the induced hyper-objective is typically nonsmooth and, under standard assumptions, no better than Lipschitz continuous \citep[Corollary~6.1]{chen2023bilevel}.  At precisely this level of regularity, computing (stronger) approximate Clarke stationary points is, in general, computationally intractable \citep{kornowski2021oracle,tian2021hardness}. 
Thus Lipschitz regularity alone is too weak for algorithmic purposes, motivating new, verifiable structural conditions that make meaningful hyper-stationarity attainable.

\textbf{Our Contributions.} In this paper, we address the above challenges and show that (strong) Clarke stationarity of hyper-objective functions is computable for a broad class of BLO problems. As our key contribution, we identify a hidden weak convexity/concavity structure of the hyper-objective in nonconvex–P\L{} BLO,\footnote{That is, BLO problems with a nonconvex upper-level objective and a lower-level function satisfying the P\L{} condition.} which places the analysis within the well-studied weakly convex/concave framework. Within this setting, approximate hyper-stationarity admits a natural Clarke-subdifferential characterization that we leverage to obtain computable guarantees.

The foundation of our analysis is a new concept, \emph{set smoothness} (Definition~\ref{def:setsmooth}), which extends classical smoothness to set-valued mappings and encompasses several variational regularity notions \cite{mordukhovich2024second,dontchev2009implicit,chen2023bilevel,khan2016set}. Building on this notion, we prove two complementary statements. First, if the lower-level solution mapping is set smooth, then the optimistic (resp.\ pessimistic) hyper-objective is weakly concave (resp.\ weakly convex). Second, a broad and verifiable condition guarantees set smoothness: When the lower-level function satisfies an error bound condition—equivalently, the P\L{} condition—the solution mapping is set smooth. Together, these statements provide checkable criteria under which the hyper-objective inherits a weak convexity/concavity structure.

% The foundation of our analysis is a new concept we introduce, called \emph{set smoothness} (Definition~\ref{def:setsmooth}), which extends classical smoothness to set-valued mappings. This notion generalizes existing variational conditions \cite{mordukhovich2024second,dontchev2009implicit,chen2023bilevel,khan2016set} and allows us to establish two key results:	(i) The optimistic (resp. pessimistic) hyper-objective function is weakly concave (resp. convex) if the lower-level solution mapping is set smooth;
% 	(ii) If the lower-level function satisfies an error bound condition (equivalently, the PŁ condition), then the solution mapping is set smooth.
% 	Taken together, these results yield verifiable conditions under which the hyper-objective function admits a weakly convex/concave property.

Once the hidden weak convexity/concavity of the hyper-objective is in place, approximate Clarke hyper-stationary points can be computed by a simple inexact zeroth-order scheme. 
In the weakly convex case, results based on the Moreau envelope \citep{davis2019stochastic,zhao2022randomized,nazari2020adaptive} provide convergence and complexity guarantees.  
For the weakly concave case, however, no existing algorithmic guarantee is known, and the absence of a Moreau-type smoothing technique makes the analysis significantly more challenging. We overcome this by developing a novel convergence proof based on a Brøndsted–Rockafellar-type approximation result \cite[Theorem 2]{robinson1999linear}, and establish, to the best of our knowledge, the first general computational guarantee for finding approximate Clarke stationary points of nonsmooth hyper-objective functions.

Overall, these developments, particularly set smoothness, provide a principled foundation for the computability of hyper-stationarity in BLO and open new avenues for other structured nonsmooth optimization problems.

\paragraph{Organization.} This paper is organized as follows. Sec.~\ref{sec:pre} collects assumptions and preliminaries. 
Sec.~\ref{sec:setsmooth} introduces set smoothness and uses it to reveal a weak convexity/concavity structure of the hyper-objective. 
Sec.~\ref{sec:algorithm} presents an inexact zeroth-order scheme and establishes convergence guarantees for computing approximate Clarke hyper-stationary points. 
Sec.~\ref{sec:conclusion} concludes with final remarks.
	
\paragraph{Notation.} The notation used in this paper is mostly standard. We use $\|\x\|$ to denote the Euclidean norm of a vector $\x$ and $\|\bA\|$ to denote the $l_2$ norm of a matrix $\bA$. We use $\B(\z,r)$ to denote the ball centering at $\z$ with radius $r$, i.e., $\{\x:\|\x-\z\|\leq r\}$. For a scalar $\alpha\in\R$ and a set $\cS\subseteq\R^n$, we use $\alpha\cdot \cS$ to denote their product $\{\alpha\x:\x\in \cS\}$. We define the distance from a vector $\x\in\R^n$ to $\cS$ by $\dist(\x,\cS)\coloneqq\min_{\z\in \cS}\|\x-\z\|$ and the projection of $\x$ onto $\cS$ by $\Pi_{\cS}(\x)\coloneqq \argmin_{\z\in \cS}\|\x-\z\|$. We use ${\rm Conv}(\cS)$ to denote the convex hull of $\cS$. For two sets $\cS_1,\cS_2\subseteq\R^n$, define their Minkowski sum by
$\cS_1+\cS_2 \coloneqq \{\x_1+\x_2:\ \x_1\in\cS_1,\ \x_2\in\cS_2\}$,
and define their Hausdorff distance (with respect to $\|\cdot\|$) by
\[
d_{\mathrm H}(\cS_1,\cS_2)
\coloneqq
\max\left\{\sup_{\x_1\in\cS_1}\dist(\x_1,\cS_2),\ \sup_{\x_2\in\cS_2}\dist(\x_2,\cS_1)\right\}.
\] For a differentiable function $g:\R^m\times\R^n\to\R$, we use $\nabla g$ to denote its gradient w.r.t. the joint variables $(\x,\y)$ and $\nabla_{\x}g$ (resp. $\nabla_{\y}g$) to denote its gradient w.r.t. $\x$ (resp. $\y$). 
	
\section{Preliminaries}\label{sec:pre}
	In this paper, we focus on nonconvex-P\L{} BLO problems and make the following assumptions:
    \vspace{1em}
	\begin{assumption}[Lower-level Functions]\label{assum:basic} \quad
		\begin{enumerate}[label={{\rm (A\arabic*).}}]%[label={{\rm (\roman*)}}]
			\item The function $f$ is $L_f$-smooth and twice differentiable. Moreover, $\nabla\nabla_{\y} f$ is $H_f$-Lipschitz continuous, i.e., for all $\x_1,\x_2\in\R^m$ and $\y_1,\y_2\in\R^n$,
			\[\left\|\nabla\nabla_{\y} f(\x_1,\y_1)-\nabla\nabla_{\y} f(\x_2,\y_2)\right\|\leq H_f\left( \|\x_1-\x_2\|+\|\y_1-\y_2\|\right). \]
			\item The solution set $\cS(\x)=\argmin_{\y\in\R^n}f(\x,\y)$ is nonempty closed convex for all $\x\in\R^m$. 
			\item There exists a scalar $\tau>0$ such that  for all $\x\in\R^m$ and $\y\in\R^n$, 
			\[\dist(\y,\cS(\x))\leq\tau\|\nabla_{\y} f(\x,\y)\|.\]
		\end{enumerate}
	\end{assumption}
\vspace{0.2em}
	\begin{assumption}[Upper-level and Hyper-objective Functions]\label{assum:basic2} \quad
		\begin{enumerate}[label={{\rm (B\arabic*).}}]%[label={{\rm (\roman*)}}]
			\item The function $F$ is $M_{F}$-Lipschitz continuous and $L_F$-smooth.
			\item There exists $\x^{\star}\in\R^m$ such that $\varphi_o(\x^{\star})>-\infty$ (resp. $\varphi_p(\x^{\star})<+\infty$) for the optimistic (resp. pessimistic) setting.
		\end{enumerate}
	\end{assumption}
Assumptions (A1), (A2), and (B1) are standard in BLO settings; see, e.g., \cite{hong2023two,chen2024finding,ghadimi2018approximation,xiao2023alternating,abolfazli2025perturbed} and the references therein. Assumption (B2) guarantees that the hyper-objective functions are well-defined and is imposed without loss of generality.  Assumption (A3) imposes an error bound in the lower-level variable that holds uniformly over the upper-level parameter. 
This requirement is strictly weaker than the strong convexity-in-$y$ conditions commonly used in prior work \cite{franceschi2018bilevel,ji2021bilevel,hong2023two}, as it allows the solution set $\arg\min_{\y} f(\x,\y)$ to be multi-valued. Under $L_f$–smoothnes $f$ in $\y$, (A3) implies the P\L{} inequality
 \[f(\x,\y)-\min\limits_{\y\in\R^n} f(\x,\y)\leq\frac{\tau L_f^2}{2}\|\nabla_{\y}f(\x,\y)\|^2\text{ for all }\x\in\R^m,\y\in\R^n,\]
 vice versus~\cite[Theorem 3.1]{liao2024error}. 
Hence, our setting aligns with the widely adopted nonconvex–P\L{} framework for BLO \cite{shen2023penalty,xiao2023alternating,liu2022bome}.

	%Such error bound conditions have been extensively adopted in recent bilevel literature \cite{chen2023bilevel,shen2023penalty,xiao2023generalized}, which imply,
Under the standing assumptions, we begin with the solution mapping $\cS$, which under (A3) admits the following equivalent characterization:
	\begin{equation}\label{eq:S}
		\cS(\x)=\left\{\y \in \R^n:\nabla_\y f(\x,\y)=\bz\right\}.
	\end{equation}  
	Furthermore, under Assumption \ref{assum:basic}, the solution mapping $\cS$ as well as the hyper-objective functions $\varphi_o$ and $\varphi_p$ are the Lipschitz continuous.
	\vspace{1em}
	\begin{lemma}[Lipschitz Continuity of $\cS(\x)$]\label{le:Lipschitz}(cf. \cite[Proposition 6.1]{chen2023bilevel})
		Under Assumption \ref{assum:basic}, the lower-level solution set function is $M_{\cS}$-Lipschitz continuous with $M_{\cS}= L_f\tau$, i.e.,  for any $\x_1,\x_2\in\R^m$,
		\[d_{\mathrm H}\left(\cS(\x_1),\cS(\x_2)\right)\leq M_{\cS}\|\x_1-\x_2\|.\]
	\end{lemma}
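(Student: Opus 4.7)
The plan is to use the equivalent characterization $\cS(\x) = \{\y : \nabla_{\y} f(\x,\y) = \bz\}$ (valid under the error bound (A3) and $L_f$-smoothness) together with the error bound itself to convert a gradient estimate into a distance estimate. The definition of $\dist(\cS_1,\cS_2)$ in the notation section is symmetric, so it suffices to bound $\sup_{\y_1 \in \cS(\x_1)} \dist(\y_1, \cS(\x_2))$ and then invoke symmetry by swapping the roles of $\x_1$ and $\x_2$.

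First I would fix an arbitrary $\y_1 \in \cS(\x_1)$ and apply (A3) at the point $(\x_2, \y_1)$ to obtain
\[
\dist(\y_1, \cS(\x_2)) \;\leq\; \tau \,\|\nabla_{\y} f(\x_2, \y_1)\|.
\]
Next I would exploit the fact that $\y_1$ is a stationary point of $f(\x_1,\cdot)$, so $\nabla_{\y} f(\x_1, \y_1) = \bz$, which lets me rewrite the right-hand side as $\tau\,\|\nabla_{\y} f(\x_2, \y_1) - \nabla_{\y} f(\x_1, \y_1)\|$. The $L_f$-smoothness of $f$ from (A1), applied to its $\y$-gradient as a function of $\x$, then yields
\[
\|\nabla_{\y} f(\x_2, \y_1) - \nabla_{\y} f(\x_1, \y_1)\| \;\leq\; L_f \,\|\x_1 - \x_2\|.
\]
Chaining these two inequalities gives $\dist(\y_1, \cS(\x_2)) \leq L_f \tau \,\|\x_1 - \x_2\|$ for every $\y_1 \in \cS(\x_1)$. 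Taking the supremum over $\y_1 \in \cS(\x_1)$ and then swapping the roles of $\x_1$ and $\x_2$ produces the desired bound on $\dist(\cS(\x_1),\cS(\x_2))$ with constant $M_{\cS} = L_f \tau$.

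There is essentially no hard step here: the proof is three lines once the equivalent characterization of $\cS(\x)$ as the zero set of $\nabla_{\y} f(\x,\cdot)$ is in hand. The only subtlety is ensuring that (A3) is applicable at the pair $(\x_2, \y_1)$, which is immediate because (A3) is assumed to hold for \emph{all} $\x \in \R^m$ and $\y \in \R^n$, and that the $L_f$-smoothness in (A1) is joint in $(\x,\y)$, so the mixed difference $\nabla_{\y} f(\x_2, \y_1) - \nabla_{\y} f(\x_1, \y_1)$ is indeed controlled by $L_f \|\x_1 - \x_2\|$.
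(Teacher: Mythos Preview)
Your proposal is correct and essentially identical to the paper's proof: both apply the error bound (A3) at $(\x_2,\y_1)$, use $\nabla_{\y} f(\x_1,\y_1)=\bz$ together with $L_f$-smoothness to bound the gradient difference, and then symmetrize via the definition of the Hausdorff distance.
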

	\vspace{0.4em}
	\begin{lemma}(cf. \cite[Proposition 5.3]{chen2023bilevel})\label{co:varphi}
		Suppose that Assumption \ref{assum:basic} and \ref{assum:basic2} hold.  For the optimistic (resp. pessimistic) setting,  $\varphi_o$ (resp. $\varphi_{p}$) is $M_{\varphi}$-Lipschitz continuous with $M_{\varphi}=M_F(1+L_f\tau)$.
	\end{lemma} 

The Lipschitz continuity of the hyper-objective functions ensures that the Clarke and Goldstein subdifferentials are well defined.
	\vspace{1em}
	\begin{definition}[Clarke Subdifferential](cf. \cite[Definition 1.1]{clarke1975generalized})
    \label{def:clarke}
		For a Lipschitz continuous function $g:\R^m\to\R$, the Clarke subdifferential of $g$ at a point $\x\in\R^m$ is defined by
		\[\partial g(\x)\coloneqq{\rm Conv}\left(\left\{\s\in \R^m:\exists~\x^{\prime}\to\x,\ \nabla g(\x^{\prime})\text{ exists, }\nabla g(\x^{\prime})\to \s\right\}\right).\]
We say that $\x$ is an $(\epsilon,\delta)$-approximate Clarke stationary point of $g$ if
    \[ \dist\left(\bz,\bigcup_{\z\in\B(\x,\delta)}\partial g(\z)\right)\leq\epsilon.\]
	\end{definition}
\begin{remark}
The Clarke subdifferential $\partial g$ reduces to the gradient $\nabla g$ when $g$ is smooth. Moreover, if $g$ is convex, then $\partial g(\x)$ coincides with the vanilla subgradients defined by $\{\s:g(\z)\geq g(\x)+\s^T(\z-\x)\ \forall\ \z\in\R^m\}$.
\end{remark}    

 % For a general Lipschitz continuous function $g$, we say that $\x$ is a Clarke stationary point of $g$ if $\bz\in\partial g(\x)$, and $\x$ is an $(\epsilon,\nu)$-approximate Clarke stationary point of $g$ if
	% \[\dist\left(\bz,{\textstyle \bigcup}_{\z\in\B\left(\x,\nu\right)}\partial g(\z)\right)\leq \epsilon. \]
Then, the Goldstein $\delta$-subdifferential at $\x$ can be constructed by the convex hull of the Clarke subdifferentials taken over a $\delta$-neighborhood of $\x$. Here is the formal definition of Goldstein $\delta$-subdifferential. 
		\vspace{1em}
\begin{definition}[Goldstein $\delta$-Subdifferential](cf.  \cite[Definition 2.2]{goldstein1977optimization})
For a Lipschitz continuous function $g:\R^m\to\R$ and a scalar $\delta\geq0$, the Goldstein $\delta$-subdifferential of $g$ at a point $\x\in\R^m$ is defined by
\[ \partial_{\delta}g(\x)\coloneqq{\rm Conv}\left(\left\{\bigcup_{\z\in\B(\x,\delta)}\partial g(\z)\right\}\right).\]
We say that $\x$ is an $(\epsilon,\delta)$-approximate Goldstein stationary point of $g$ if $\dist\left(\bz,\partial_{\delta}g(\x)\right)\leq\epsilon$.
\end{definition}
Leveraging the  Lipschitz continuity of the hyper-objective, recent work has established the computability of $(\epsilon,\delta)$–Goldstein hyper-stationary points \citep{chen2023bilevel}. 
However, Goldstein stationarity is strictly weaker and does not, in general, imply Clarke stationarity. 
Indeed, there exists a convex, $2$-Lipschitz function $\tilde g:\R^2\to\R$ and a point $\x$ such that $\x$ is $(0,\delta)$–Goldstein stationary while $
\min_{\z\in \mathbb{B}(\x,\delta)} \dist(\bz,\partial \tilde g(\z)) \;\ge\; \frac{2}{\sqrt{5}}$; see \citep[Proposition~2.7]{tian2022finite}. 
To obtain stronger algorithmic guarantees for hyper-objective minimization, we therefore focus on computing Clarke stationary points (and their $(\epsilon,\delta)$–approximate variants).

Despite the well-definedness, approximate Clarke stationarity is not achievable in finite time for general Lipschitz functions \cite{kornowski2021oracle,tian2021hardness}. A sufficient condition for its computability is the weak convexity of $g$ \cite{davis2019stochastic}. To elaborate on this, we review some basic properties of weakly convex functions. Given a function $g:\R^m\to\R$ and a scalar $r>0$, we say that $g$ is $r$-\textit{weakly convex} if the function $\x\mapsto g(\x)+\frac{r}{2}\|\x\|^2$ is convex. The following equivalent characterizations are useful for our analysis.
	\vspace{1em}
	\begin{lemma}[Equivalent Characterizations of Weak Convexity]\label{le:wc}(cf. \cite[Theorem 3.1]{daniilidis2005filling} and \cite[Proposition 2.2]{atenas2023unified})
		For a Lipschitz continuous function $g:\R^m\to\R$, the following statements are equivalent:   
		\begin{enumerate}[label={{\rm (\roman*)}}]
			\item $g$ is $r$-weakly convex. 
			\item For any $\theta\in[0,1]$ and $\x_1,\x_2\in\R^m$, we have 
			\[g\left(\theta\x_1+(1-\theta)\x_2\right)\leq\theta g(\x_1)+(1-\theta)g(\x_2)+\frac{r}2\theta(1-\theta)\|\x_1-\x_2\|^2.\]
			\item For any $\x_1,\x_2\in\R^m$ with $\partial g(\x_1)\neq\emptyset$, and all subgradients $\v\in\partial g(\x_1)$, we have
			\[\v^T(\x_2-\x_1)\leq g(\x_2)-g(\x_1)+\frac{r}{2}\|\x_2-\x_1\|^2.\]
		\end{enumerate}
	\end{lemma}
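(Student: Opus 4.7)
The plan is to reduce the entire equivalence to classical convexity characterizations by introducing the auxiliary function $h(\x) \coloneqq g(\x) + \frac{r}{2}\|\x\|^2$. By the very definition of $r$-weak convexity, statement (i) is precisely the convexity of $h$, so it suffices to show that (ii) and (iii) each characterize the convexity of $h$. Since $g:\R^m\to\R$ is finite-valued, so is $h$; a finite convex function on $\R^m$ is automatically locally Lipschitz, so $g$ itself inherits local Lipschitz continuity and the Clarke subdifferential that appears in (iii) is well-defined and nonempty at every point. This reduction frames the whole lemma as a translation of two textbook facts about convex functions through a fixed quadratic shift.

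For (i) $\Leftrightarrow$ (ii), I would apply the usual Jensen characterization of convexity to $h$ and substitute $h=g+\frac{r}{2}\|\cdot\|^2$ on both sides. The identity
\[ \|\theta\x_1+(1-\theta)\x_2\|^2 = \theta\|\x_1\|^2+(1-\theta)\|\x_2\|^2-\theta(1-\theta)\|\x_1-\x_2\|^2 \]
cancels the quadratic boundary terms exactly, and what remains is the inequality in (ii). Both implications follow from the same algebraic rearrangement, so (i) $\Leftrightarrow$ (ii) is immediate once $h$ is in play.

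For (i) $\Leftrightarrow$ (iii), my plan is to combine the classical subgradient inequality for convex functions with standard Clarke subdifferential calculus. Since $h$ is convex and finite on $\R^m$, its Clarke subdifferential coincides with the convex subdifferential, so convexity of $h$ is characterized by $\u^T(\x_2-\x_1) \le h(\x_2)-h(\x_1)$ for all $\u\in\partial h(\x_1)$. Clarke's sum rule, which holds with equality because $\frac{r}{2}\|\cdot\|^2$ is continuously differentiable, gives $\partial h(\x) = \partial g(\x) + r\x$. Writing $\u=\v+r\x_1$ with $\v\in\partial g(\x_1)$ and using the elementary identity
\[ \frac{r}{2}\|\x_2\|^2 - \frac{r}{2}\|\x_1\|^2 - r\x_1^T(\x_2-\x_1) = \frac{r}{2}\|\x_2-\x_1\|^2 \]
turns the subgradient inequality for $h$ into precisely (iii); reversing the same manipulations gives the converse.

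The main delicate point is the treatment of the Clarke subdifferential: I must invoke (a) the identification of the Clarke subdifferential of a convex function with its convex subdifferential, so that the classical subgradient inequality is available in both directions, and (b) the equality form of the Clarke sum rule, which requires one of the summands (here the quadratic) to be continuously differentiable. Both are standard in nonsmooth analysis (e.g., Rockafellar--Wets), and once they are cited the remainder of the proof reduces to the elementary algebraic rearrangements sketched above.
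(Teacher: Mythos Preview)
The paper does not supply its own proof of this lemma; it simply cites \cite{daniilidis2005filling} and \cite{atenas2023unified}. Your reduction to the convex function $h=g+\tfrac{r}{2}\|\cdot\|^2$ is the standard route, and your treatment of (i)$\Leftrightarrow$(ii) and of (i)$\Rightarrow$(iii) is correct.

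There is, however, a circularity in your handling of (iii)$\Rightarrow$(i). You justify that $\partial g$ is well-defined and nonempty everywhere by arguing: $h$ is convex, hence locally Lipschitz, hence $g$ is locally Lipschitz. But ``$h$ is convex'' is precisely statement (i), which is what you are trying to deduce from (iii). Without an independent reason for $g$ to be locally Lipschitz, the Clarke subdifferential as defined in this paper (only for Lipschitz functions) need not be defined, and even under more general definitions $\partial g(\x_1)$ could be empty on a large set, rendering (iii) vacuous while (i) fails. Your ``reversing the same manipulations'' therefore does not close the loop: the convex subgradient characterization you invoke requires a nonempty subdifferential at every point, and you have not secured that from (iii) alone.

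The fix is to take local Lipschitz continuity of $g$ as a standing hypothesis (this is what the cited references do), after which your argument goes through cleanly: for each $\x_1$ the nonempty Clarke set $\partial g(\x_1)$ produces, via the identity you wrote, an affine minorant of $h$ touching at $\x_1$; hence $h$ equals the supremum of its affine minorants and is convex. Alternatively, you can prove only the implications actually used in the paper---namely (ii)$\Rightarrow$(i) and (i)$\Rightarrow$(iii)---and note that the full three-way equivalence needs the extra regularity.
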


For an $r$–weakly convex function $g:\R^m\to\R$ with $\gamma\in(0,\tfrac{1}{r})$, we define its Moreau envelope and the proximal mapping by
\[
 g_{\gamma}(\x)\coloneqq \inf_{\z\in\R^n}\left\{g(\z)+\frac{1}{2\gamma}\|\x-\z\|^2\right\},
\qquad
\prox_{\gamma,g}(\x)\coloneqq \underset{\z\in\R^n}{\argmin}\left\{g(\z)+\frac{1}{2\gamma}\|\x-\z\|^2\right\}.
\]
Clearly, $\prox_{\gamma,g}(\x)$ is single-valued and well-defined, when $g$ is $r$-weakly convex and $\gamma<\frac1{r}$. Next, we provide the standard result, which establishes a stationarity measure based on the gradient of the Moreau envelope.
	\vspace{1em}
	\begin{lemma}[Properties of Moreau Envelope]\label{le:moreau} (cf. \cite[Proposition 2.1]{zhao2022randomized}) Suppose that $g:\R^n\to\R$ is a $r$-weakly convex function and $\gamma<\frac1{r}$. The following hold:
		\begin{enumerate}[label={{\rm (\roman*)}}]
			\item $g_{\gamma}(\x)\leq g(\x)-\frac{1-\gamma r}{2\gamma}\left\|\x-\prox_{\gamma, g}(\x)\right\|^2$.
			\item  $\gamma\dist\left(\bz,\partial g(\hat{\x})\right)\leq\left\|\x-\prox_{\gamma, g}(\x)\right\|\leq \frac{\gamma}{1-\gamma r}\dist\left(0,\partial g(\x)\right)$.
			\item $\x=\prox_{\gamma, g}(\x)$ if and only if $\bz\in\partial g(\x)$. 
			\item $\nabla  g_{\gamma}(\x)=\frac{1}{\gamma}\left(\x-\prox_{\gamma, g}(\x)\right)$. %and
		\end{enumerate}
	\end{lemma}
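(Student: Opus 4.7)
The foundation of the argument is the observation that for $\gamma < 1/r$, the inner objective $\phi_{\x}(\z) := g(\z) + \frac{1}{2\gamma}\|\x - \z\|^2$ is $(1/\gamma - r)$-strongly convex: indeed, $r$-weak convexity of $g$ makes $g + \frac{r}{2}\|\cdot\|^2$ convex, while the residual quadratic has net curvature $1/\gamma - r > 0$. Strong convexity immediately gives well-definedness and uniqueness of $\hat{\x} := \prox_{\gamma, g}(\x)$ together with the standard quadratic-growth inequality $\phi_{\x}(\z) \geq \phi_{\x}(\hat{\x}) + \frac{1/\gamma - r}{2}\|\z - \hat{\x}\|^2$ around its minimizer. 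Both facts will be invoked repeatedly.

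Parts (i), (iii), and (iv) follow quickly from this setup. For (i), I would specialize the quadratic-growth inequality to $\z = \x$: the right-hand side equals $g_{\gamma}(\x) + \frac{1/\gamma - r}{2}\|\x - \hat{\x}\|^2$ by definition of the Moreau envelope, the left-hand side equals $g(\x)$, and rearranging delivers the bound. For (iv), I would invoke a Danskin/envelope-theorem argument: the strong convexity of $\phi_{\x}$ makes $\hat{\x}(\cdot)$ continuous, the inner objective is jointly smooth in $\x$, and a direct calculation yields $\nabla g_{\gamma}(\x) = \frac{1}{\gamma}(\x - \hat{\x})$. For (iii), the first-order optimality condition for $\phi_{\x}$ at its minimizer reads $\frac{1}{\gamma}(\x - \hat{\x}) \in \partial g(\hat{\x})$; setting $\x = \hat{\x}$ gives $\bz \in \partial g(\x)$, and conversely if $\bz \in \partial g(\x)$, then $\x$ itself satisfies the stationarity condition for $\phi_{\x}$, so uniqueness of the minimizer forces $\x = \hat{\x}$.

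The main obstacle is (ii). The lower bound follows immediately from the inclusion $\frac{1}{\gamma}(\x - \hat{\x}) \in \partial g(\hat{\x})$ already noted, which yields $\dist(\bz, \partial g(\hat{\x})) \leq \frac{1}{\gamma}\|\x - \hat{\x}\|$. The upper bound is harder because it must transfer subgradient information at $\x$ into control over the displacement $\|\x - \hat{\x}\|$. My plan is to pick any $\v \in \partial g(\x)$ and apply Lemma~\ref{le:wc}(iii) with $\x_1 = \x$ and $\x_2 = \hat{\x}$ to obtain $\v^T(\hat{\x} - \x) \leq g(\hat{\x}) - g(\x) + \frac{r}{2}\|\x - \hat{\x}\|^2$, then invoke part (i) (rewritten as $g(\hat{\x}) - g(\x) \leq -\frac{2 - \gamma r}{2\gamma}\|\x - \hat{\x}\|^2$ using $g_{\gamma}(\x) = g(\hat{\x}) + \frac{1}{2\gamma}\|\x - \hat{\x}\|^2$) to control the function-value gap. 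Substituting and simplifying the two resulting coefficients produces $\v^T(\x - \hat{\x}) \geq \frac{1 - \gamma r}{\gamma}\|\x - \hat{\x}\|^2$, and a Cauchy--Schwarz step then gives $\|\v\| \geq \frac{1 - \gamma r}{\gamma}\|\x - \hat{\x}\|$. Taking the infimum over $\v \in \partial g(\x)$ delivers the desired upper bound.
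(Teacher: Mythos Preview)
Your proposal is correct. The paper itself does not supply a proof of Lemma~\ref{le:moreau}; it simply quotes the result from \cite[Proposition 2.1]{zhao2022randomized} and moves on. So there is no ``paper's own proof'' to compare against, and what you have written is a self-contained argument that fills this gap. The strong-convexity-of-$\phi_{\x}$ observation is exactly the right starting point, and your derivations of (i), (iii), (iv), and both halves of (ii) are sound; in particular, the chain for the upper bound in (ii)---combining the subgradient inequality from Lemma~\ref{le:wc}(iii) with the rewritten form of (i), simplifying the coefficients to $-\tfrac{1-\gamma r}{\gamma}$, and finishing with Cauchy--Schwarz---goes through cleanly.
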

	\vspace{0.2em}
Lemma \ref{le:moreau} (ii) and (iv) show that $\|\nabla g_{\gamma}(\x)\|$ equals zero if and only if $\x=\prox_{\gamma, g}(\x)$ and $\bz\in\partial g(\x)$. Thus $\|\nabla g_{\gamma}(\x)\|$ is a valid Clarke stationarity measure. 
Moreover, by Lemma \ref{le:moreau} (iv),  if $\|\nabla g_{\gamma}(\x)\|\le \epsilon$, then
$\|\prox_{\gamma, g}(\x)-\x\|=\gamma\|\nabla g_{\gamma}(\x)\|\le \gamma\epsilon$, hence $\dist\big(\bz,\partial g(\prox_{\gamma, g}(\x))\big)\le \epsilon$. 
Equivalently,
\begin{equation}\label{eq:measure}
		\|\nabla g_{\gamma}(\x)\|\leq\epsilon \quad \Longrightarrow\quad \dist\left(\bz,\bigcup_{\z\in\B(\x,\gamma\epsilon)}\partial g(\z)\right)\leq\epsilon. 
	\end{equation}

 %    Hence, the norm of the Moreau envelope gradient $\|\nabla g_{\gamma}(\x)\|$ serves as a Clarke stationarity measure. Moreover, there is a direct relationship between $\|\nabla  g_{\gamma}(\x)\|$ and the approximate Clarke stationarity measure $\dist(\bz,{\bigcup}_{\z\in\B(\x,\nu)}\partial g(\z))$:
	% \begin{equation}\label{eq:measure}
	% 	\|\nabla g_{\gamma}(\x)\|\leq\epsilon \quad \Longrightarrow\quad \dist(\bz,{\textstyle \bigcup}_{\z\in\B(\x,\gamma\epsilon)}\partial g(\z))\leq\epsilon. 
	% \end{equation}
Since \citet{davis2019stochastic} establish non-asymptotic rates for finding $\x$ with $\|\nabla g_{\gamma}(\x)\|\le \epsilon$ when $g$ is weakly convex, \eqref{eq:measure} implies that $(\epsilon,\gamma\epsilon)$–approximate Clarke stationarity is computable in this regime. This observation motivates us to establish weak–convexity–type structure for hyper-objectives; see Sec.~\ref{sec:setsmooth}.

	Before leaving this section, we record \textit{weak concavity}, a notion closely related to weak convexity.
	For a function $g:\R^n\to\R$, we say that $g$ is $r$-weakly convex if $-g$ is $r$-weakly convex. We have the following facts.
	\vspace{1em}
	\begin{fact}\label{fact:relative}
		If a function $g:\R^m\to\R$ is $r$-weakly concave, then for any $\x_1,\x_2\in\R^m$, and $\v\in\partial g(\x_1)$, we have
		$g(\x_2)\leq g(\x_1)+\v^T(\x_2-\x_1)+\frac{r}{2}\|\x_2-\x_1\|^2.$
	\end{fact}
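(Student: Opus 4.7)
The plan is to reduce the claim to Lemma~\ref{le:wc}(iii), the subgradient inequality for weakly convex functions, by negating $g$. The definition immediately preceding the Fact states that $g$ is $r$-weakly concave if and only if $h \coloneqq -g$ is $r$-weakly convex, so the inequality of Lemma~\ref{le:wc}(iii) is at my disposal for $h$; the task is just to transfer it back to $g$ with the correct handling of the Clarke subdifferential.

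The key ingredient is the identity $\partial(-g)(\x) = -\partial g(\x)$ for Clarke subdifferentials of locally Lipschitz functions. I would read this off directly from the definition: wherever $g$ is differentiable, $-g$ is also differentiable with $\nabla(-g)(\x') = -\nabla g(\x')$, and Rademacher's theorem guarantees differentiability almost everywhere. The set of limits $\{\s : \exists\, \x' \to \x,\ \nabla(-g)(\x') \to \s\}$ is therefore the negation of the analogous set for $g$, and taking the convex hull commutes with this sign flip. Hence, for any $\v \in \partial g(\x_1)$, we have $-\v \in \partial h(\x_1)$.

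Applying Lemma~\ref{le:wc}(iii) to the $r$-weakly convex function $h$ at the point $\x_1$ with subgradient $-\v$ yields
\[
(-\v)^T(\x_2 - \x_1) \leq h(\x_2) - h(\x_1) + \frac{r}{2}\|\x_2 - \x_1\|^2.
\]
Substituting $h = -g$ and multiplying both sides by $-1$ gives exactly
\[
g(\x_2) \leq g(\x_1) + \v^T(\x_2 - \x_1) + \frac{r}{2}\|\x_2 - \x_1\|^2,
\]
which is the stated inequality. The entire argument is essentially a one-liner once the negation rule for Clarke subdifferentials is noted, and that identity is the only step that deserves any thought; there is no genuine obstacle beyond bookkeeping with signs.
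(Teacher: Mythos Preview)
Your proposal is correct and follows essentially the same route as the paper: negate $g$ to obtain a weakly convex function, use $\partial(-g)=-\partial g$, and apply Lemma~\ref{le:wc}(iii). The only cosmetic difference is that the paper cites Clarke and Vial for the negation rule and local Lipschitz continuity, whereas you sketch the former directly from the definition.
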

	\vspace{0.5em}
	\begin{fact}\label{fact:smooth}	If a function $g:\R^m\to\R$ is $r$-smooth, then $g$ is $r$-weakly convex and $r$-weakly concave, and the following inequality holds:
		\begin{equation*}\label{eq:smooth}
			\left|\theta g(\x_1)+(1-\theta)g(\x_2)-g\left(\theta\x_1+(1-\theta)\x_2\right)\right|\leq\frac{r}2\theta(1-\theta)\|\x_1-\x_2\|^2.
		\end{equation*}
	\end{fact}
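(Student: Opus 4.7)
The plan is to derive the two weak-convexity/concavity conclusions directly from the classical descent lemma, and then package them via Lemma~\ref{le:wc}(ii) to obtain the sandwich inequality~\eqref{eq:smooth}. Since $g:\R^m\to\R$ is $r$-smooth, the gradient $\nabla g$ is $r$-Lipschitz, so the fundamental theorem of calculus along the segment connecting $\x_1$ and $\x_2$ yields the two-sided descent inequality
\[
\Bigl|\, g(\x_2) - g(\x_1) - \langle \nabla g(\x_1),\x_2-\x_1\rangle \,\Bigr|\leq \frac{r}{2}\|\x_2-\x_1\|^2.
\]
Unpacking the absolute value gives a lower and an upper quadratic bound on $g(\x_2)$ in terms of $g(\x_1)$, $\nabla g(\x_1)$, and the quadratic $\tfrac{r}{2}\|\x_2-\x_1\|^2$.

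Next, I would translate each of these bounds into a weak convexity/concavity statement. The lower bound $g(\x_2)\geq g(\x_1)+\langle\nabla g(\x_1),\x_2-\x_1\rangle-\tfrac{r}{2}\|\x_2-\x_1\|^2$ rearranges to the subgradient-type inequality in Lemma~\ref{le:wc}(iii) (with $\v=\nabla g(\x_1)\in\partial g(\x_1)$), so $g$ is $r$-weakly convex. Applying the upper bound to $-g$ produces the same subgradient inequality for $-g$, i.e., $-g$ is $r$-weakly convex, which is exactly the definition of $r$-weak concavity of $g$.

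Finally, for the bound~\eqref{eq:smooth}, I would invoke Lemma~\ref{le:wc}(ii) twice. Weak convexity of $g$ directly yields
\[
\theta g(\x_1)+(1-\theta)g(\x_2)-g\bigl(\theta\x_1+(1-\theta)\x_2\bigr)\geq -\tfrac{r}{2}\theta(1-\theta)\|\x_1-\x_2\|^2,
\]
while the same lemma applied to $-g$ (which is also $r$-weakly convex by the previous step) yields the reverse inequality with the opposite sign. Combining the two produces the desired absolute-value bound.

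I do not expect any serious obstacle here: the argument is essentially a repackaging of the quadratic upper/lower envelopes produced by the descent lemma, together with the equivalences already recorded in Lemma~\ref{le:wc}. The only thing to be careful about is matching the sign convention and the constant $r$ (rather than $2r$) when passing from the two-point subgradient inequality in Lemma~\ref{le:wc}(iii) to the midpoint-type inequality in Lemma~\ref{le:wc}(ii), which is immediate since both conditions are stated with the same constant $r$ in the lemma.
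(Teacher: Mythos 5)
The paper states Fact~\ref{fact:smooth} without any proof, treating it as standard. Your argument supplies exactly the expected justification: the two-sided descent lemma for an $r$-smooth function furnishes the subgradient inequality of Lemma~\ref{le:wc}(iii) with $\v=\nabla g(\x_1)$ for both $g$ and $-g$, yielding $r$-weak convexity and $r$-weak concavity, and then Lemma~\ref{le:wc}(ii) applied to $g$ and to $-g$ produces the two halves of the sandwich bound~\eqref{eq:smooth}. The sign bookkeeping is done correctly and the constants match throughout (the lemma states (ii) and (iii) with the same modulus $r$), so there is no gap; this is the natural and correct proof of the unproven fact.
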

	
	\section{Unveiling Hidden Structural Properties}\label{sec:setsmooth}
	This section is devoted to unveiling the hidden structural properties of the hyper-objective functions, which is the key contribution of this paper. Recall that the hyper-objective functions in \eqref{eq:varphi} are defined by minimizing/maximizing the upper-level function w.r.t. $\y$ over the parameterized set $\cS(\x)$. We are motivated to investigate the property of the set-valued function $\cS$. Inspired by the smoothness of real-valued functions, we propose a novel concept of smoothness for set-valued functions, formalized in Definition \ref{def:setsmooth}. As we will show, the lower-level solution set function $\cS$ satisfies this smoothness property, which in turn ensures the weak concavity (resp. convexity) of $\varphi_o$ (resp. $\varphi_p$).
 \vspace{0.5em}
    \begin{mdframed}[linewidth=0.5pt,roundcorner=3pt]
	\begin{definition}[Set Smoothness]\label{def:setsmooth}
    \vspace{0.5em}
		For a set-valued function $\cY:\R^m\rightrightarrows\R^n$ with a convex domain $\dom(\cY)\subseteq\R^m$, we say that it is $L$-smooth if for any $\x_1,\x_2\in\dom(\cY)$, $\theta\in[0,1]$, and all $\y\in \cY(\theta\x_1+(1-\theta)\x_2)$, there exist ${\y}_1\in \cY(\x_1)$ and ${\y}_2\in \cY(\x_2)$ such that
		\begin{equation}\label{eq:point_set}
			\|\theta{\y}_1+(1-\theta){\y}_2-\y\|\leq \frac{L}2\theta(1-\theta)\|\x_1-\x_2\|^2;
		\end{equation}
		\begin{equation}\label{eq:point_set2}
			\|{\y}_1-{\y}_2\|^2\leq {L}\|\x_1-\x_2\|^2.
		\end{equation}
        \vspace{0.1em}
	\end{definition}
    \end{mdframed}
 The condition \eqref{eq:point_set} can be viewed as a natural extension of the gradient-Lipschitz smoothness condition for real-valued functions to the setting of set-valued mappings. It guarantees that a convex combination of $\y_1 \in \cY(\x_1)$ and $\y_2 \in \cY(\x_2)$ provides a close approximation to a point in $\cY(\theta\x_1+(1-\theta)\x_2)$, with an error that decays quadratically in $\|\x_1-\x_2\|$. This yields the following set inclusion:
	\begin{equation}\label{eq:set}
		\cY\left(\theta\x_1+(1-\theta)\x_2\right)\subseteq \theta \cY(\x_1)+(1-\theta)\cY(\x_2)+\frac{L}2\theta(1-\theta)\|\x_1-\x_2\|^2\cdot\B(\bz,1).
	\end{equation} 
% {\color{red}{The condition~\eqref{eq:point_set2} can be interpreted as a pairwise form of the Aubin (or Lipschitz-like) property~\cite[Definition 9.36]{rockafellar2009variational}, ensuring that one can select corresponding points $\y_1 \in \cY(\x_1)$ and $\y_2 \in \cY(\x_2)$ whose distance grows at most linearly with $\|\x_1 - \x_2\|$. This also yields the following set inclusion: 
% \begin{equation}\label{eq:set_inclusion2}
% \cY(\x_2)
% \subseteq
% \cY(\x_1) + \sqrt{L}\|\x_1 - \x_2\| \cdot \mathbb{B}(\bz,1). 
% \end{equation}}}
Intuitively, \eqref{eq:point_set2} enforces a consistent branch selection between $\cY(\x_1)$ and $\cY(\x_2)$: The chosen representatives $\y_1$ and $\y_2$ must remain aligned (Lipschitz-close) as the input varies, thereby excluding cross-branch pairings that could make the interpolation in~\eqref{eq:point_set} hold trivially while the underlying geometry is severely mismatched.

 \begin{example}[Why the condition~\eqref{eq:point_set2} is needed: A trivialization for the condition \eqref{eq:point_set}]
Define the set-valued map $\cY:\R\rightrightarrows \R^2$ by
$
\cY(x)=\{(z,x):z\in\R\}$. 
Pick $x_1=a>0$, $x_2=-a$, and $\theta=\tfrac12$; then $\theta x_1+(1-\theta)x_2=0$ and $\cY(0)=\{(z,0):z\in\R\}$.
Choose $\y=\bz\in\cY(0)$, \(\y_1=(K,a)\in\cY(a)\), and \(\y_2=(-K,-a)\in\cY(-a)\). We have
\[
\tfrac12 \y_1+\tfrac12 \y_2 \;=\; \y,
\]
so the condition~\eqref{eq:point_set} holds with zero error even though
$\|\y_1-\y_2\|=2\sqrt{a^2+K^2}$ can be made arbitrarily large as $K\to\infty$.
Hence the condition \eqref{eq:point_set} alone does not preclude severely mismatched pairings on a convex domain.
In contrast, \eqref{eq:point_set2} enforces
$\|\y_1-\y_2\|^2 \le L\|x_1-x_2\|^2 = 4La^2$, which forces $K^2\leq (L-1)a^2$
and thereby rules out such cross-branch selections unless the Lipschitz modulus is correspondingly large.
\end{example}

	With the notion of set smoothness in place, we now present our first main theoretical result. It shows that set smoothness serves as the key vehicle for establishing weak convexity/concavity of parametric optimization problems with coupled constraints: Under mild Lipschitz-type assumptions, the induced value function inherits weak convexity (or weak concavity). This is formalized in Theorem \ref{pro:setsmooth} below.
	% With the above geometric intuitions, we are ready to establish a general result on the set smoothness for parametric optimization with coupled constraints.
		\vspace{0.4em} 
    \begin{theorem}
	    [Implication of Set Smoothness]\label{pro:setsmooth}
		Consider a real-valued function $g:\R^m\times\R^n\to\R$ and a set-valued function $\cY:\R^m\rightrightarrows\R^n$. Let $\phi(\x)\coloneqq\max_{\y\in\cY(\x)}g(\x,\y)$ and $\cD\coloneqq\{\x:\phi(\x)>-\infty\}$. Assume that $\cD$ is a nonempty closed convex set, $\cY$ is $L_{\cY}$-smooth on $\cD$, and $g$ is $M_g$-Lipschitz continuous w.r.t. $\y$, $L_g$-smooth on $\cD\times {\rm Conv}\left(\bigcup_{\x\in\cD}\cY(\x)\right)$. Then, the function $\phi$ is $\rho$-weakly convex with $\rho=M_g{L_{\cY}}+L_g(1+ L_{\cY})$. 	
	\end{theorem}
	\vspace{0.2em}

We now instantiate the framework in the bilevel setting. Our first step is to certify \emph{set smoothness} for the lower-level solution map. Under the error-bound (EB) condition in Assumption~\ref{assum:basic} (A3), the mapping $
\mathcal{S}:\; \x \mapsto \arg\min_{\y\in\mathbb{R}^n} f(\x,\y)$
is $L_{\mathcal S}$-smooth (Theorem~\ref{pro:Swc}). 
Combining this with Theorem~\ref{pro:setsmooth} shows that the pessimistic hyper-objective $\varphi_p$ inherits weak convexity (resp. the optimistic $\varphi_o$ inherits weak concavity).

	% Note that the (pessimistic) hyper-objective function of BLO is a special case of the setting considered by Proposition \ref{pro:setsmooth}. When the lower-level solution set function $\cS$ is smooth, Proposition \ref{pro:setsmooth} implies the weak convexity of $\varphi_{p}$ (and further the weak concavity of $\varphi_o$) under mild conditions. We then investigate the properties of $\cS$. It turns out that under Assumption \ref{assum:basic}, we have the smoothness of $\cS$.
	\vspace{1em}
	\begin{theorem}[EB Implies Set Smoothness]\label{pro:Swc}
		If a function $f:\R^m\times\R^n\to\R$ satisfies Assumption \ref{assum:basic}, then its associated solution set function $\cS:\x\mapsto\argmin_{\y\in\R^n}f(\x,\y)$ is $L_{\cS}$-smooth with $L_{\cS}=\max\{2H_f\tau(1+9L_f^2\tau^2),4L^2_f\tau^2\}$.
	\end{theorem}
   \paragraph{Proof idea (why residual backfilling is essential)} 
   Fix $\x_1,\x_2$ and $\theta\in(0,1)$, and set $\x^{\theta}\coloneqq \theta\x_1+(1-\theta)\x_2$.
Given any $\y\in \cS(\x^{\theta})$, our goal is to select
$\y_1\in \cS(\x_1)$ and $\y_2\in \cS(\x_2)$ so that
\eqref{eq:point_set} and \eqref{eq:point_set2} hold.
A natural choice is to project $\y$ onto the endpoint fibers, yielding
$\bar{\y}_i \coloneqq \Pi_{\cS(\x_i)}(\y)$ for $i=1,2$.
Using Lemma~\ref{le:Lipschitz}, it is easy to see that this naive selection satisfies
\eqref{eq:point_set2}.

However, even when each fiber $\cS(\x)$ is convex, the midpoint
$\bar \y^\theta \coloneqq \theta\,\bar \y_1+(1-\theta)\,\bar \y_2$
may correspond, at $\x^\theta$, to a \emph{different local selection}
of the set-valued map $\cS(\cdot)$ than the given $\y\in \cS(\x^\theta)$.
Consequently, in general multi-solution settings the naive midpoint error
can be \emph{first-order},
\[
\|\bar\y^\theta-\y\| \;=\; \Theta\big(\|\x_1-\x_2\|\big),
\]
which motivates an additional correction to \emph{synchronize the selections}. 

We therefore \emph{align the midpoint} and \emph{backfill the residual}:
First project the naive midpoint to the middle fiber,
$\hat{\y}\coloneqq \Pi_{\cS(\x^{\theta})}(\bar\y^{\theta})$,
and then use the residual $\y-\hat \y$ to refine the endpoint representatives:
\[
\boxed{\quad
{\y}_i \;\coloneqq\; \Pi_{\cS(\x_i)}\big(\,\bar\y_i + (\y-\hat{\y})\,\big),\qquad i=1,2.\quad}
\]
This construction cancels the first-order branch mismatch in the convex
combination and leaves only a quadratic remainder. Consequently,
\eqref{eq:point_set} holds while \eqref{eq:point_set2} remains valid.

    \vspace{2mm}
    \begin{remark}
        Think of $\cS(\x)$ as a family of convex ``fibers''. The direct projections
$\bar\y_1,\bar\y_2$ may live on selections that are not synchronized with the
selection containing $\y$, so their convex combination carries a first-order drift.
Projecting $\bar\y^\theta$ to $\cS(\x^\theta)$ identifies the correct selection
at the midpoint; adding the \emph{same} residual $\y-\hat\y$ to both endpoints
moves them to the same selection as $\y$, making the first-order terms cancel
in the average and exposing the desired $\cO(\|\x_1-\x_2\|^2)$ behavior.
    \end{remark}
    
	Theorem \ref{pro:Swc} is not limited to the lower-level problem of BLO but applies to general parametric optimization problems. The established set smoothness property offers new insights into the structure of the solution mapping, which goes beyond the variational conditions considered in the literature \cite{mordukhovich2024second,khan2016set,dontchev2009implicit,chen2023bilevel,yao2023relative}. 
	\vspace{1em}
	\begin{remark}[Local Conditions are Sufficient]\label{re:Pro2bounded}
    Suppose the solution mapping $\cS$ is defined on a bounded convex domain $\cD\subseteq\R^m$.
To ensure Theorem~\ref{pro:Swc}, it suffices that Assumption~\ref{assum:basic}
holds on the set $\cD\times\cY$, where
\[
\cY = \operatorname{Conv}\!\;\left(\bigcup_{\x\in\cD}\cS(\x)\right)
\;+\; \tfrac12\,M_{\cS}\,\operatorname{diam}(\cD)\,\B(\bz,1).
\] 
	\end{remark}
The following simple example shows that the set smoothness of $\cS$ does not, in general, require Assumption~\ref{assum:basic}. This suggests that alternative sufficient conditions may guarantee set smoothness; identifying such conditions is an interesting direction for future work.

\begin{example}\label{example:simple}
Consider $f:\R^2\to\R$ defined by $f(x,y)=g(\sin x + y)$, where 
$g:\R\to\R$ has a nonempty set of minimizers 
$\cV=\arg\min_{z\in\R} g(z)$. 
Then the solution set admits a closed form:
\[
\cS(x)=\arg\min_{y\in\R} f(x,y)
=\cV-\sin x
:=\{\,v-\sin x:\ v\in\cV\,\}.
\]
In particular, $\cS$ is $1$-smooth in the sense of Definition~\ref{def:setsmooth} 
(since it is a translation of the fixed set $\cV$ by the scalar $-\sin x$), 
even though $f$ need not satisfy Assumption~\ref{assum:basic}.
\end{example}

	% \begin{example}\label{example:simple}
	% 	Consider the function $f:\R^2\to\R$ defined by $f(x,y)=g(\sin(x)+y)$, where $g:\R\to\R$ is an arbitrary function with a nonempty set of minimizers $\cV=\argmin_{z\in\R}g(z)$. Then, it is straightforward to conclude that the solution set function admits a closed form: 
	% 	\[\cS(x)=\argmin_{y\in\R} f(x,y)=\cV-\{\sin(x)\}.\]
	% 	Clearly, $\cS$ is $1$-smooth by Definition \ref{def:setsmooth}, though $f$ may not satisfy Assumption \ref{assum:basic}.
	% \end{example}
With Theorems~\ref{pro:setsmooth} and~\ref{pro:Swc} in place, we now state our
main result on the weak convexity/concavity of the hyper-objective
$\varphi_o$ (resp.\ $\varphi_p$).

	\vspace{1em}
    \begin{theorem}[Weak convexity/concavity of the hyper-objectives]\label{th:main}
Assume Assumptions~\ref{assum:basic} and~\ref{assum:basic2}. 
Let $L_{\cS}$ be the set-smoothness modulus of $\cS$ from Theorem~\ref{pro:Swc}, and define
$
\rho \coloneqq M_F\,L_{\cS} + L_F\,(1+L_{\cS}).$
Then the following hold:
\begin{enumerate}[{\rm (i)}]
\item The optimistic hyper-objective $\varphi_o$ is $\rho$-weakly concave.
\item The pessimistic hyper-objective $\varphi_p$ is $\rho$-weakly convex.
\end{enumerate}
\end{theorem}
	\vspace{0.4em}
	%In \cite{chen2023bilevel}, since $\varphi$ is only shown to be Lipschitz continuous, the algorithmic result is restricted to finding approximate Goldstein stationary points. Now that we have unveiled the weak convexity (resp. relative smoothness) of $\varphi$ (resp. $\phi$), it is natural to ask whether sharper stationarity can be computed. We provide an affirmative answer in next section, by showing that in both optimistic and pessimistic settings, a simple zero order method computes approximately stationary points.
	\begin{proof}[Proof of Theorem \ref{th:main}]
		Theorem \ref{pro:Swc} guarantees that the set-valued function $\cS$ is $L_{\cS}$-smooth. Then, the result (ii) directly follows from Theorem \ref{pro:setsmooth}. Hence, we only need to prove (i). Note that
		\[-\varphi_o(\x) = -\min_{\y\in \cS(\x)}\ F(\x,\y)=\max_{\y\in \cS(\x)}\ -F(\x,\y).\]
		We see that $-\varphi_o$ is $\rho$-weakly convex by Theorem \ref{pro:setsmooth}. It follows that $\varphi_o$ is $\rho$-weakly concave.
	\end{proof}\vspace{0.3em}

Theorem~\ref{th:main} establishes the weak concavity/convexity of the
hyper-objectives in nonconvex--P\L{} bilevel optimization (BLO). This stands in
contrast to classical results (e.g.,~\cite[Lemma~2.2]{ghadimi2018approximation}),
which impose strong convexity of the lower level to obtain smooth
hyper-objectives. Our result is significant because it places the minimization of
these generally \emph{nonsmooth} hyper-objectives within the framework of weakly
concave/convex optimization. As a consequence, computing approximate Clarke
hyper-stationary points becomes tractable—an avenue we pursue in the next
section. Crucially, all of these developments hinge on the \emph{set smoothness}
property (Definition~\ref{def:setsmooth}), highlighting the utility of this notion.
\vspace{0.4em}
    
	\begin{remark}[Lower-level Constraints Matter]	Under Assumptions~\ref{assum:basic} and~\ref{assum:basic2}, imposing an
upper-level constraint $\x\in\cX\subseteq\R^m$ with $\cX$ nonempty, closed, and
convex preserves the conclusions of Theorem~\ref{th:main}: The functions
$\varphi_o(\x)+\iota_{\cX}(\x)$ and $\varphi_p(\x)+\iota_{\cX}(\x)$ remain
weakly concave and weakly convex, respectively, where
$\iota_{\cX}$ denotes the indicator of $\cX$. In contrast, adding a lower-level constraint $\y\in\cY$ can destroy the weak
concavity/convexity of the hyper-objectives, because the set smoothness of
$\cS$ may fail in this case; see Example~\ref{example:y}. Developing structural
conditions that recover such properties for lower-level constrained BLO is an
interesting direction for future work.\end{remark}
	\vspace{0.5em}

    \begin{example}\label{example:y}
Let $\cY=[0,1]\times[0,1]$. Consider the pessimistic bilevel problem \emph{with a lower-level constraint}:
\begin{equation}\label{eq:example}	\begin{array}{rl}
\min\limits_{x\in\R}\max\limits_{\y\in\R^2}&\ -\1^{\top}\y\\
{\rm s.t.}&\ \y\in\argmin\limits_{\y^{\prime}\in\cY}\ \|\y^{\prime}-(x,2)\|^2.	\end{array}	\end{equation}	
Assumptions~\ref{assum:basic} and~\ref{assum:basic2} 
are directly satisfied for \eqref{eq:example}, except 
\emph{for the unconstrained lower level}; the only deviation here is the added constraint $\y\in\cY$.

The lower-level solution set is the projection of $(x,2)$ onto the box $\cY$, hence
\[
\cS(x)=
\begin{cases}
\{(0,1)\}, & x\le 0,\\
\{(x,1)\}, & 0\le x\le 1,\\
\{(1,1)\}, & x\ge 1.
\end{cases}
\]
Therefore the pessimistic hyper-objective is
\[
\varphi_p(x)\;
=\begin{cases}
-1, & x\le 0,\\
-x-1, & 0\le x\le 1,\\
-2, & x\ge 1.
\end{cases}
\]
This function is \emph{not} weakly convex. Indeed, for any $\rho\ge 0$ consider 
$h_\rho(x):=\varphi_p(x)+\tfrac{\rho}{2}x^2$. Then $h_\rho$ has left and right derivatives at $x=0$ given by
$h_\rho'(0^-)=0$ and $h_\rho'(0^+)=-1$ (the quadratic term has zero slope at $0$),
which violates the monotonicity of one-sided derivatives required by convexity.
Hence no $\rho$ makes $h_\rho$ convex, i.e., $\varphi_p$ is not weakly convex.
\end{example}

	\section{Computing Approximate Clarke Hyper-stationarity}\label{sec:algorithm}
Equipped with the weak convexity/concavity of the hyper-objectives, our next goal
is to establish the computability of Clarke stationary points. First-order methods are impractical here because subgradients of the hyper-objectives are
typically unavailable. In contrast, under mild conditions—e.g., $F(\x,\cdot)$ is
concave (resp.\ convex) so that the inner maximization (resp.\ minimization) is tractable, the \emph{function values} of the hyper-objectives can be (approximately)
evaluated at a given $\x$
\cite{dutta2020algorithms,shehu2021inertial}. This motivates the use of
\emph{zeroth-order} methods for minimizing hyper-objectives
\cite{chen2023bilevel,lin2022gradient}. In particular, we adopt the inexact
zeroth-order method (IZOM) in Algorithm~\ref{al:1}, which employs a  deterministic subroutine
$\cA$ to approximately evaluate $\varphi_\beta(\x)$ (with additive accuracy $w$)
by solving the inner problem in \eqref{eq:varphi}; see
\cite{chen2023bilevel,jiang2022generalized,jiang2024near,kaushik2021method} for
practical implementations of $\cA$.

	\begin{algorithm}[htbp] 
		\caption{Inexact Zeroth-order Method (cf. \cite[Algorithm 2]{chen2023bilevel}) }
		\begin{algorithmic}
			\Require{ Radius $\varepsilon>0$, iteration number $T\in\N$, stepsize $\eta$, initial point $\x_0\in\R^m$, inexact error $w>0$, and mode parameter $\beta\in\{1,0\}$}
			\For{$t=0,1,\ldots,T-1$}
			\State Sample $\u_t$  from the the uniform distribution on the unit sphere in $\R^m$
			\State Compute $\cA^{\beta}_{w}(\x_t+\varepsilon\u_t)$ and $\cA^{\beta}_{w}(\x_t-\varepsilon\u_t)$ by subroutine $\cA$
			\State Set $\tilde{G}(\x_t)=\frac{m}{2\varepsilon}(\cA^{\beta}_{w}(\x_t+\varepsilon\u_t)-\cA^{\beta}_{w}(\x_t-\varepsilon\u_t) )\u_t$ 
			\State $\x_{t+1}=\x_t-\eta \tilde{G}(\x_t)$
			\EndFor{}
			\Ensure{$\bar{\x}$ uniformly chosen from $\left\{\x_t\right\}_{t=0}^{T-1}$}
		\end{algorithmic}\label{al:1}
	\end{algorithm}
	\begin{algorithm}[htbp]
		\caption{Deterministic Subroutine $\cA$}
		\begin{algorithmic}
			\Require{ Accuracy $w>0$, iterate point $\x\in\R^m$, and mode $\beta\in\{1,0\}$}
			\If{$\beta=1$}
			\State Compute a value $\tilde{\varphi}(\x)$ satisfying $|\tilde{\varphi}(\x)-\varphi_o(\x)|\leq w$
			\Else 
			\State Compute a value $\tilde{\varphi}(\x)$ satisfying $|\tilde{\varphi}(\x)-\varphi_p(\x)|\leq w$
			\EndIf
			\Ensure{$\cA^{\beta}_{w}(\x)=\tilde{\varphi}(\x)$}
		\end{algorithmic} \label{al:cA}
	\end{algorithm}

 Let $\varphi_{p,\gamma}$ denote the Moreau envelope of $\varphi_p$ with parameter
$\gamma$. We quantify hyper-stationarity as follows: In the optimistic case we
use the approximate Clarke stationarity measure, i.e., Definition \ref{def:clarke}, 
while in the pessimistic case we use the gradient norm of the envelope, i.e.,
$\|\nabla \varphi_{p,\gamma}(\x)\|$. These two criteria can be unified in principle via \eqref{eq:measure}; in
either form they are strictly stronger than the Goldstein stationarity measure; see Sec.~\ref{sec:pre}.
 
	We then present the main theorem of this section.
	\vspace{1em}
	\begin{theorem}\label{th:compute}
		Suppose that Assumptions \ref{assum:basic} and \ref{assum:basic2} hold. Given an iteration number $T\in\N$, set $\eta=\Theta(m^{-\frac12}T^{-\frac12}), \varepsilon=\cO(T^{-\frac12}), w=\cO({m^{-\frac34}T^{-\frac34}})$ for Algorithm \ref{al:1}. 
		Then, the following hold:
		\begin{enumerate}[{\rm (i)}]
			\item Let $\Delta_o\coloneqq\varphi_o(\x_0)-\min_{\x} \varphi_o(\x)+2M_{\varphi}\varepsilon$ with $M_{\varphi}$ given in Lemma \ref{co:varphi}. For optimistic BLO, we have 
			\[\E\left[\dist\left(\bz,{\textstyle \bigcup}_{\z\in\B\left(\bar\x,\delta\right)}\partial \varphi_o(\z)\right)^2\right]=\cO\left(\frac{\sqrt{m}(\Delta_o+1)}{\sqrt{T}}\right)\text{ with }\delta=\cO\left({T^{-\frac14}}\right). \] 
			\item Let $\gamma\in(0,\tfrac{1}{\rho+1})$ with $\rho>0$ given in Theorem \ref{th:main}, and $\Delta_p\coloneqq\varphi_{p,\gamma}(\x_0)-\min_{\x} \varphi_{p,\gamma}(\x)$. For pessimistic BLO, we have
			\[\E[\|\nabla \varphi_{p,\gamma}(\bar\x)\|^2]=\cO\left({\frac{\sqrt{m}(\Delta_p+1)}{\sqrt{T}}}\right).\]
		\end{enumerate}
	\end{theorem}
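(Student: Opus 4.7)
I would analyze iterates of the form $\x_{t+1}=\x_t-\eta\g_t$, where $\g_t$ is a standard two-point randomized zeroth-order estimator built from the inexact oracle $\cA$ at $\x_t$ and at a smoothed point $\x_t+w\u_t$, with $\u_t$ uniform on the unit sphere in $\R^m$. Two ingredients drive the plan: by Theorem~\ref{th:main}, $\varphi_p$ is $\rho$-weakly convex and $\varphi_o$ is $\rho$-weakly concave, and the convolved surrogate $\tilde\varphi_w(\x)\coloneqq \E_{\u}\varphi(\x+w\u)$ inherits the same modulus $\rho$ since convolution with a symmetric kernel preserves weak convexity/concavity; by Corollary~\ref{co:varphi}, both hyper-objectives are $M_\varphi$-Lipschitz, which yields the standard bias/variance bounds $\|\E[\g_t\mid\x_t]-\nabla\tilde\varphi_w(\x_t)\|=O(\delta/w)$ and $\E[\|\g_t\|^2\mid\x_t]=O(m(M_\varphi^2+\delta^2/w^2))$, together with $|\tilde\varphi_w-\varphi|\le M_\varphi w$.

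\textbf{Pessimistic part (ii).} The plan is to use the Moreau envelope $\varphi_{p,\gamma}$ with $\gamma<1/\rho$ as a Lyapunov function, mirroring the weakly-convex stochastic-optimization template of \cite{davis2019stochastic,zhao2022randomized}. Setting $\hat\x_t=\prox_{\gamma,\varphi_p}(\x_t)$ and combining the sandwich $\varphi_{p,\gamma}(\x_{t+1})\le \varphi_p(\hat\x_t)+\tfrac{1}{2\gamma}\|\x_{t+1}-\hat\x_t\|^2$, the weak-convexity inequality from Lemma~\ref{le:wc}(iii) tested against $\hat\x_t$, and Lemma~\ref{le:moreau} produces a one-step descent bound of the form
\[
\E[\varphi_{p,\gamma}(\x_{t+1})\mid\x_t]\le \varphi_{p,\gamma}(\x_t)-\eta(1-\gamma\rho)\|\nabla\varphi_{p,\gamma}(\x_t)\|^2+\eta\,\mathrm{bias}_t+\tfrac{\eta^2}{2\gamma}\E[\|\g_t\|^2\mid\x_t],
\]
where $\mathrm{bias}_t$ aggregates the $O(M_\varphi w)$ smoothing error and the $O(\delta/w)$ ZO bias and is absorbed via Young's inequality. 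Telescoping over $t=0,\dots,T-1$, dividing by $T\eta$, and plugging in $\eta=\Theta(1/\sqrt{mT})$, $w=O(m^{-3/4}T^{-3/4})$, $\delta=O(T^{-1/2})$ balances the three error sources and yields the claimed $O(\sqrt m(\Delta_p+1)/\sqrt T)$ bound on $\min_t\E\|\nabla\varphi_{p,\gamma}(\x_t)\|^2$; sampling $\bar\x$ uniformly from $\{\x_0,\dots,\x_{T-1}\}$ delivers (ii).

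\textbf{Optimistic part (i) and main obstacle.} No Moreau envelope is available for weakly concave minimization, so the preceding potential argument is unavailable. I would instead descend directly on the smoothed $\tilde\varphi_{o,w}$, which is $C^{1,1}$ and still $\rho$-weakly concave, and apply Fact~\ref{fact:relative} with $\v_t=\nabla\tilde\varphi_{o,w}(\x_t)$ to obtain
\[
\tilde\varphi_{o,w}(\x_{t+1})\le \tilde\varphi_{o,w}(\x_t)-\eta\langle\nabla\tilde\varphi_{o,w}(\x_t),\g_t\rangle+\tfrac{\rho\eta^2}{2}\|\g_t\|^2.
\]
Conditional expectation, Young-ing the bias term, telescoping, and swapping $\tilde\varphi_{o,w}(\x_0)$ for $\varphi_o(\x_0)$ at cost $2M_\varphi\delta$ (which is exactly the $+2M_\varphi\delta$ appearing in $\Delta_o$) produces a random iterate $\bar\x$ with $\E\|\nabla\tilde\varphi_{o,w}(\bar\x)\|^2=O(\sqrt m(\Delta_o+1)/\sqrt T)$. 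To upgrade this into an approximate \emph{Clarke} stationarity certificate for the original $\varphi_o$, I would observe that $\nabla\tilde\varphi_{o,w}(\bar\x)$ lies in an $\alpha$-approximate subdifferential of the $\rho$-weakly convex function $-\varphi_o$ at $\bar\x$ with $\alpha=O(M_\varphi w)$, and invoke the Brøndsted--Rockafellar-type approximation theorem \cite[Theorem~2]{robinson1999linear} applied to $-\varphi_o$: this returns $\bar\z\in\B_m(\bar\x,\nu)$ and $\bar\s\in\partial\varphi_o(\bar\z)$ with $\|\bar\s\|$ of the same order as $\|\nabla\tilde\varphi_{o,w}(\bar\x)\|$. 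The main technical obstacle is calibrating $(\eta,w,\delta)$ together with the B--R Lagrange scaling so that simultaneously (a) the ZO variance and bias both match the $O(\sqrt m/\sqrt T)$ rate, (b) the oracle inexactness $\delta/w$ stays under control, and (c) the resulting B--R inflation matches the stated $\nu=O(m^{1/4}/\sqrt T)$; the three constraints couple nonlinearly and are precisely what force the scalings $\eta=\Theta(1/\sqrt{mT})$, $\delta=O(1/\sqrt T)$, $w=O(1/(m^{3/4}T^{3/4}))$ declared in the theorem. A secondary subtlety is that the classical B--R statement is for convex functions; its weakly-convex extension must be applied so as to convert a convex-hull/Goldstein-style certificate into the \emph{union}-of-subdifferentials measure that appears in (i).
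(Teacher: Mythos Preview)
Your high-level plan is exactly the paper's: for the pessimistic case, a Moreau-envelope Lyapunov argument in the style of \cite{davis2019stochastic} using the $\rho$-weak convexity of $\varphi_p^{\delta}$; for the optimistic case, direct descent on the smoothed surrogate via Fact~\ref{fact:relative}, followed by a Br{\o}ndsted--Rockafellar transfer (the paper packages this step as Lemma~\ref{le:trans}, applying Proposition~\ref{pro:trans} to the genuinely convex function $\x\mapsto\tfrac{\rho}{2}\|\x\|^2-\varphi_o(\x)$ and then undoing the quadratic shift).

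There is, however, a concrete error: you have interchanged the roles of $\delta$ and $w$ relative to Algorithm~\ref{al:1}. In the paper, $\delta$ is the \emph{smoothing radius} (the estimator evaluates at $\x_t\pm\delta\u_t$ with $\u_t$ uniform on the unit \emph{ball}) and $w$ is the \emph{oracle inexactness} of the subroutine~$\cA$. With your assignment, your own bias bound $O(\delta/w)$ becomes $O\!\bigl(T^{-1/2}\cdot m^{3/4}T^{3/4}\bigr)=O(m^{3/4}T^{1/4})$, which diverges, so the ``balancing'' you describe does not occur. The internal inconsistency is visible in your own write-up: you state the smoothing error as $|\tilde\varphi_w-\varphi|\le M_\varphi w$, yet you also identify the $+2M_\varphi\delta$ appearing in $\Delta_o$ as the cost of swapping $\tilde\varphi_{o,w}(\x_0)$ for $\varphi_o(\x_0)$; these two claims cannot both hold. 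Once the roles are restored, the relevant inexactness term is $\bigl(\tfrac{mw}{\delta}\bigr)^2$ (as in \eqref{eq:G_2}), the smoothing error entering the B--R step is $\varepsilon=O(\delta M_\varphi\sqrt{m})$, and the stated scalings $\eta=\Theta(1/\sqrt{mT})$, $\delta=\cO(1/\sqrt{T})$, $w=\cO(1/(m^{3/4}T^{3/4}))$ do make every residual term $\cO(1/T)$ per step, exactly as in \eqref{eq:2T} and \eqref{eq:1T}.
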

	\vspace{0.4em}
	Theorem \ref{th:compute} demonstrates, for the first time, that approximate Clarke hyper-stationarity is computable for nonconvex-P\L{} BLO in both optimistic and pessimistic settings. This result significantly improves the existing computational guarantees for nonsmooth hyper-objective functions, which are mainly based on the Goldstein stationarity \cite{chen2023bilevel,khanduri2025doubly}. The proof of the optimistic case relies on a Br{\o}ndsted-Rockafellar-like relation, details of which can be found in Appendix \ref{subsec:optimistic}. 
	
	\section{Conclusion and Discussion}\label{sec:conclusion}
	In this paper, we established the first theoretical guarantee for computing approximate Clarke hyper-stationarity in nonconvex-P\L{} BLO. The key step is unveiling the hidden structural properties of hyper-objective functions via the newly introduced smoothness concept for set-valued functions. Specifically, we proved that (i) the smoothness of the set-valued function $\cY$ ensures the weak convexity of the function $\x\mapsto\max_{\y\in\cY(\x)}\phi(\x,\y)$; and  (ii) the lower-level solution set function of BLO satisfies set smoothness. Consequently, we obtained the weak convexity/concavity of hyper-objective functions. With these properties in hand, we showed that an inexact zeroth-order method can compute approximate Clarke stationary points of hyper-objective functions.
	
	We believe that our developments contribute to a deeper understanding of the computability properties of BLO and open up several directions for future research. First, with the established structural properties, our work calls for designing faster algorithms for computing Clarke hyper-stationarity. Second, it would be valuable to generalize our methodology to establish adapted properties for BLO in other settings (e.g., structured lower-level constrained BLO \cite{khanduri2025doubly}). Furthermore, our set smoothness property, along with Theorem \ref{pro:setsmooth}, may find applications in other fields such as coupled minmax optimization \cite{Tsaknakis2023Minimax} and set-valued optimization \cite{khan2016set}, where set-valued functions play a central role.

    \section*{Ackonwledgements}
   Jiajin Li was supported by a Natural Sciences and Engineering Research Council of Canada Discovery Grant RGPIN-2025-05817. Anthony Man-Cho So was supported in part by the Hong Kong Research Grants Council (RGC) General Research Fund (GRF) project CUHK 14204823.
	\bibliographystyle{abbrvnat}    
	\bibliography{ref}
	%%%%%%%%%%%%%%%%%%%%%%%%%%%%%%%%%%%%%%%%%%%%%%%%%%%%%%%%%%%%
	
	\newpage
	\section*{NeurIPS Paper Checklist}
	\begin{enumerate}
		
		\item {\bf Claims}
		\item[] Question: Do the main claims made in the abstract and introduction accurately reflect the paper's contributions and scope?
		\item[] Answer: \answerYes{} % Replace by \answerYes{}, \answerNo{}, or \answerNA{}.
		\item[] Justification: The developments established in Sec. \ref{sec:setsmooth}, \ref{sec:algorithm} support the claims made in the abstract and introduction.
		\item[] Guidelines:
		\begin{itemize}
			\item The answer NA means that the abstract and introduction do not include the claims made in the paper.
			\item The abstract and/or introduction should clearly state the claims made, including the contributions made in the paper and important assumptions and limitations. A No or NA answer to this question will not be perceived well by the reviewers. 
			\item The claims made should match theoretical and experimental results, and reflect how much the results can be expected to generalize to other settings. 
			\item It is fine to include aspirational goals as motivation as long as it is clear that these goals are not attained by the paper. 
		\end{itemize}
		
		\item {\bf Limitations}
		\item[] Question: Does the paper discuss the limitations of the work performed by the authors?
		\item[] Answer: \answerYes{} % Replace by \answerYes{}, \answerNo{}, or \answerNA{}.
		\item[] Justification: We used Example \ref{example:y} to show that our work does not apply to the lower-level constrained case. In Sec. \ref{sec:conclusion}, we gave some remarks that reflect the limitations of our work and call for future research.
		\item[] Guidelines:
		\begin{itemize}
			\item The answer NA means that the paper has no limitation while the answer No means that the paper has limitations, but those are not discussed in the paper. 
			\item The authors are encouraged to create a separate "Limitations" section in their paper.
			\item The paper should point out any strong assumptions and how robust the results are to violations of these assumptions (e.g., independence assumptions, noiseless settings, model well-specification, asymptotic approximations only holding locally). The authors should reflect on how these assumptions might be violated in practice and what the implications would be.
			\item The authors should reflect on the scope of the claims made, e.g., if the approach was only tested on a few datasets or with a few runs. In general, empirical results often depend on implicit assumptions, which should be articulated.
			\item The authors should reflect on the factors that influence the performance of the approach. For example, a facial recognition algorithm may perform poorly when image resolution is low or images are taken in low lighting. Or a speech-to-text system might not be used reliably to provide closed captions for online lectures because it fails to handle technical jargon.
			\item The authors should discuss the computational efficiency of the proposed algorithms and how they scale with dataset size.
			\item If applicable, the authors should discuss possible limitations of their approach to address problems of privacy and fairness.
			\item While the authors might fear that complete honesty about limitations might be used by reviewers as grounds for rejection, a worse outcome might be that reviewers discover limitations that aren't acknowledged in the paper. The authors should use their best judgment and recognize that individual actions in favor of transparency play an important role in developing norms that preserve the integrity of the community. Reviewers will be specifically instructed to not penalize honesty concerning limitations.
		\end{itemize}
		
		\item {\bf Theory assumptions and proofs}
		\item[] Question: For each theoretical result, does the paper provide the full set of assumptions and a complete (and correct) proof?
		\item[] Answer:  \answerYes{} % Replace by \answerYes{}, \answerNo{}, or \answerNA{}.
		\item[] Justification: We provided the full set of assumptions for our theoretical results, as detailed in Sec. \ref{sec:setsmooth} and \ref{sec:algorithm}. Complete proofs can be found in the appendix.
		\item[] Guidelines:
		\begin{itemize}
			\item The answer NA means that the paper does not include theoretical results. 
			\item All the theorems, formulas, and proofs in the paper should be numbered and cross-referenced.
			\item All assumptions should be clearly stated or referenced in the statement of any theorems.
			\item The proofs can either appear in the main paper or the supplemental material, but if they appear in the supplemental material, the authors are encouraged to provide a short proof sketch to provide intuition. 
			\item Inversely, any informal proof provided in the core of the paper should be complemented by formal proofs provided in appendix or supplemental material.
			\item Theorems and Lemmas that the proof relies upon should be properly referenced. 
		\end{itemize}
		
		\item {\bf Experimental result reproducibility}
		\item[] Question: Does the paper fully disclose all the information needed to reproduce the main experimental results of the paper to the extent that it affects the main claims and/or conclusions of the paper (regardless of whether the code and data are provided or not)?
		\item[] Answer: \answerNA{} % Replace by \answerYes{}, \answerNo{}, or \answerNA{}.
		\item[] Justification: This paper focuses on the theoretical computability of hyper-stationarity for BLO and does not include experiments.
		\item[] Guidelines:
		\begin{itemize}
			\item The answer NA means that the paper does not include experiments.
			\item If the paper includes experiments, a No answer to this question will not be perceived well by the reviewers: Making the paper reproducible is important, regardless of whether the code and data are provided or not.
			\item If the contribution is a dataset and/or model, the authors should describe the steps taken to make their results reproducible or verifiable. 
			\item Depending on the contribution, reproducibility can be accomplished in various ways. For example, if the contribution is a novel architecture, describing the architecture fully might suffice, or if the contribution is a specific model and empirical evaluation, it may be necessary to either make it possible for others to replicate the model with the same dataset, or provide access to the model. In general. releasing code and data is often one good way to accomplish this, but reproducibility can also be provided via detailed instructions for how to replicate the results, access to a hosted model (e.g., in the case of a large language model), releasing of a model checkpoint, or other means that are appropriate to the research performed.
			\item While NeurIPS does not require releasing code, the conference does require all submissions to provide some reasonable avenue for reproducibility, which may depend on the nature of the contribution. For example
			\begin{enumerate}
				\item If the contribution is primarily a new algorithm, the paper should make it clear how to reproduce that algorithm.
				\item If the contribution is primarily a new model architecture, the paper should describe the architecture clearly and fully.
				\item If the contribution is a new model (e.g., a large language model), then there should either be a way to access this model for reproducing the results or a way to reproduce the model (e.g., with an open-source dataset or instructions for how to construct the dataset).
				\item We recognize that reproducibility may be tricky in some cases, in which case authors are welcome to describe the particular way they provide for reproducibility. In the case of closed-source models, it may be that access to the model is limited in some way (e.g., to registered users), but it should be possible for other researchers to have some path to reproducing or verifying the results.
			\end{enumerate}
		\end{itemize}

		\item {\bf Open access to data and code}
		\item[] Question: Does the paper provide open access to the data and code, with sufficient instructions to faithfully reproduce the main experimental results, as described in supplemental material?
		\item[] Answer: \answerNA{} % Replace by \answerYes{}, \answerNo{}, or \answerNA{}.
		\item[] Justification: This paper focuses on the theoretical computability of hyper-stationarity for BLO and does not include experiments requiring code.
		\item[] Guidelines:
		\begin{itemize}
			\item The answer NA means that paper does not include experiments requiring code.
			\item Please see the NeurIPS code and data submission guidelines (\url{https://nips.cc/public/guides/CodeSubmissionPolicy}) for more details.
			\item While we encourage the release of code and data, we understand that this might not be possible, so “No” is an acceptable answer. Papers cannot be rejected simply for not including code, unless this is central to the contribution (e.g., for a new open-source benchmark).
			\item The instructions should contain the exact command and environment needed to run to reproduce the results. See the NeurIPS code and data submission guidelines (\url{https://nips.cc/public/guides/CodeSubmissionPolicy}) for more details.
			\item The authors should provide instructions on data access and preparation, including how to access the raw data, preprocessed data, intermediate data, and generated data, etc.
			\item The authors should provide scripts to reproduce all experimental results for the new proposed method and baselines. If only a subset of experiments are reproducible, they should state which ones are omitted from the script and why.
			\item At submission time, to preserve anonymity, the authors should release anonymized versions (if applicable).
			\item Providing as much information as possible in supplemental material (appended to the paper) is recommended, but including URLs to data and code is permitted.
		\end{itemize}

		\item {\bf Experimental setting/details}
		\item[] Question: Does the paper specify all the training and test details (e.g., data splits, hyperparameters, how they were chosen, type of optimizer, etc.) necessary to understand the results?
		\item[] Answer: \answerNA{} % Replace by \answerYes{}, \answerNo{}, or \answerNA{}.
		\item[] Justification: This paper focuses on the theoretical computability of hyper-stationarity for BLO and does not include experiments.
		\item[] Guidelines:
		\begin{itemize}
			\item The answer NA means that the paper does not include experiments.
			\item The experimental setting should be presented in the core of the paper to a level of detail that is necessary to appreciate the results and make sense of them.
			\item The full details can be provided either with the code, in appendix, or as supplemental material.
		\end{itemize}
		
		\item {\bf Experiment statistical significance}
		\item[] Question: Does the paper report error bars suitably and correctly defined or other appropriate information about the statistical significance of the experiments?
		\item[] Answer: \answerNA{} % Replace by \answerYes{}, \answerNo{}, or \answerNA{}.
		\item[] Justification: This paper focuses on the theoretical computability of hyper-stationarity for BLO and does not include experiments.
		\item[] Guidelines:
		\begin{itemize}
			\item The answer NA means that the paper does not include experiments.
			\item The authors should answer "Yes" if the results are accompanied by error bars, confidence intervals, or statistical significance tests, at least for the experiments that support the main claims of the paper.
			\item The factors of variability that the error bars are capturing should be clearly stated (for example, train/test split, initialization, random drawing of some parameter, or overall run with given experimental conditions).
			\item The method for calculating the error bars should be explained (closed form formula, call to a library function, bootstrap, etc.)
			\item The assumptions made should be given (e.g., Normally distributed errors).
			\item It should be clear whether the error bar is the standard deviation or the standard error of the mean.
			\item It is OK to report 1-sigma error bars, but one should state it. The authors should preferably report a 2-sigma error bar than state that they have a 96\% CI, if the hypothesis of Normality of errors is not verified.
			\item For asymmetric distributions, the authors should be careful not to show in tables or figures symmetric error bars that would yield results that are out of range (e.g. negative error rates).
			\item If error bars are reported in tables or plots, The authors should explain in the text how they were calculated and reference the corresponding figures or tables in the text.
		\end{itemize}
		
		\item {\bf Experiments compute resources}
		\item[] Question: For each experiment, does the paper provide sufficient information on the computer resources (type of compute workers, memory, time of execution) needed to reproduce the experiments?
		\item[] Answer: \answerNA{} % Replace by \answerYes{}, \answerNo{}, or \answerNA{}.
		\item[] Justification: This paper focuses on the theoretical computability of hyper-stationarity for BLO and does not include experiments.
		\item[] Guidelines:
		\begin{itemize}
			\item The answer NA means that the paper does not include experiments.
			\item The paper should indicate the type of compute workers CPU or GPU, internal cluster, or cloud provider, including relevant memory and storage.
			\item The paper should provide the amount of compute required for each of the individual experimental runs as well as estimate the total compute. 
			\item The paper should disclose whether the full research project required more compute than the experiments reported in the paper (e.g., preliminary or failed experiments that didn't make it into the paper). 
		\end{itemize}
		
		\item {\bf Code of ethics}
		\item[] Question: Does the research conducted in the paper conform, in every respect, with the NeurIPS Code of Ethics \url{https://neurips.cc/public/EthicsGuidelines}?
		\item[] Answer: \answerYes{} % Replace by \answerYes{}, \answerNo{}, or \answerNA{}.
		\item[] Justification: The authors are sure that the research conducted in the paper conforms with the NeurIPS Code of Ethics. 
		\item[] Guidelines:
		\begin{itemize}
			\item The answer NA means that the authors have not reviewed the NeurIPS Code of Ethics.
			\item If the authors answer No, they should explain the special circumstances that require a deviation from the Code of Ethics.
			\item The authors should make sure to preserve anonymity (e.g., if there is a special consideration due to laws or regulations in their jurisdiction).
		\end{itemize}

		\item {\bf Broader impacts}
		\item[] Question: Does the paper discuss both potential positive societal impacts and negative societal impacts of the work performed?
		\item[] Answer: \answerNA{} % Replace by \answerYes{}, \answerNo{}, or \answerNA{}.
		\item[] Justification: This paper considers the theoretical computability of BLO. We do not see any potential negative societal impacts.
		\item[] Guidelines:
		\begin{itemize}
			\item The answer NA means that there is no societal impact of the work performed.
			\item If the authors answer NA or No, they should explain why their work has no societal impact or why the paper does not address societal impact.
			\item Examples of negative societal impacts include potential malicious or unintended uses (e.g., disinformation, generating fake profiles, surveillance), fairness considerations (e.g., deployment of technologies that could make decisions that unfairly impact specific groups), privacy considerations, and security considerations.
			\item The conference expects that many papers will be foundational research and not tied to particular applications, let alone deployments. However, if there is a direct path to any negative applications, the authors should point it out. For example, it is legitimate to point out that an improvement in the quality of generative models could be used to generate deepfakes for disinformation. On the other hand, it is not needed to point out that a generic algorithm for optimizing neural networks could enable people to train models that generate Deepfakes faster.
			\item The authors should consider possible harms that could arise when the technology is being used as intended and functioning correctly, harms that could arise when the technology is being used as intended but gives incorrect results, and harms following from (intentional or unintentional) misuse of the technology.
			\item If there are negative societal impacts, the authors could also discuss possible mitigation strategies (e.g., gated release of models, providing defenses in addition to attacks, mechanisms for monitoring misuse, mechanisms to monitor how a system learns from feedback over time, improving the efficiency and accessibility of ML).
		\end{itemize}
		
		\item {\bf Safeguards}
		\item[] Question: Does the paper describe safeguards that have been put in place for responsible release of data or models that have a high risk for misuse (e.g., pretrained language models, image generators, or scraped datasets)?
		\item[] Answer: \answerNA{} % Replace by \answerYes{}, \answerNo{}, or \answerNA{}.
		\item[] Justification: This paper focuses on the theoretical computability of hyper-stationarity for BLO and poses no such risks.
		\item[] Guidelines:
		\begin{itemize}
			\item The answer NA means that the paper poses no such risks.
			\item Released models that have a high risk for misuse or dual-use should be released with necessary safeguards to allow for controlled use of the model, for example by requiring that users adhere to usage guidelines or restrictions to access the model or implementing safety filters. 
			\item Datasets that have been scraped from the Internet could pose safety risks. The authors should describe how they avoided releasing unsafe images.
			\item We recognize that providing effective safeguards is challenging, and many papers do not require this, but we encourage authors to take this into account and make a best faith effort.
		\end{itemize}
		
		\item {\bf Licenses for existing assets}
		\item[] Question: Are the creators or original owners of assets (e.g., code, data, models), used in the paper, properly credited and are the license and terms of use explicitly mentioned and properly respected?
		\item[] Answer: \answerNA{} % Replace by \answerYes{}, \answerNo{}, or \answerNA{}.
		\item[] Justification: This paper focuses on the theoretical computability of hyper-stationarity for BLO and does not use existing assets.
		\item[] Guidelines:
		\begin{itemize}
			\item The answer NA means that the paper does not use existing assets.
			\item The authors should cite the original paper that produced the code package or dataset.
			\item The authors should state which version of the asset is used and, if possible, include a URL.
			\item The name of the license (e.g., CC-BY 4.0) should be included for each asset.
			\item For scraped data from a particular source (e.g., website), the copyright and terms of service of that source should be provided.
			\item If assets are released, the license, copyright information, and terms of use in the package should be provided. For popular datasets, \url{paperswithcode.com/datasets} has curated licenses for some datasets. Their licensing guide can help determine the license of a dataset.
			\item For existing datasets that are re-packaged, both the original license and the license of the derived asset (if it has changed) should be provided.
			\item If this information is not available online, the authors are encouraged to reach out to the asset's creators.
		\end{itemize}
		
		\item {\bf New assets}
		\item[] Question: Are new assets introduced in the paper well documented and is the documentation provided alongside the assets?
		\item[] Answer: \answerNA{} % Replace by \answerYes{}, \answerNo{}, or \answerNA{}.
		\item[] Justification: This paper focuses on the theoretical computability of hyper-stationarity for BLO and does not release new assets.
		\item[] Guidelines:
		\begin{itemize}
			\item The answer NA means that the paper does not release new assets.
			\item Researchers should communicate the details of the dataset/code/model as part of their submissions via structured templates. This includes details about training, license, limitations, etc. 
			\item The paper should discuss whether and how consent was obtained from people whose asset is used.
			\item At submission time, remember to anonymize your assets (if applicable). You can either create an anonymized URL or include an anonymized zip file.
		\end{itemize}
		
		\item {\bf Crowdsourcing and research with human subjects}
		\item[] Question: For crowdsourcing experiments and research with human subjects, does the paper include the full text of instructions given to participants and screenshots, if applicable, as well as details about compensation (if any)? 
		\item[] Answer: \answerNA{} % Replace by \answerYes{}, \answerNo{}, or \answerNA{}.
		\item[] Justification: This paper focuses on the theoretical computability of hyper-stationarity for BLO and does not involve crowdsourcing nor research with human subjects.
		\item[] Guidelines:
		\begin{itemize}
			\item The answer NA means that the paper does not involve crowdsourcing nor research with human subjects.
			\item Including this information in the supplemental material is fine, but if the main contribution of the paper involves human subjects, then as much detail as possible should be included in the main paper. 
			\item According to the NeurIPS Code of Ethics, workers involved in data collection, curation, or other labor should be paid at least the minimum wage in the country of the data collector. 
		\end{itemize}
		
		\item {\bf Institutional review board (IRB) approvals or equivalent for research with human subjects}
		\item[] Question: Does the paper describe potential risks incurred by study participants, whether such risks were disclosed to the subjects, and whether Institutional Review Board (IRB) approvals (or an equivalent approval/review based on the requirements of your country or institution) were obtained?
		\item[] Answer: \answerNA{} % Replace by \answerYes{}, \answerNo{}, or \answerNA{}.
		\item[] Justification: This paper focuses on the theoretical computability of hyper-stationarity for BLO and does not involve crowdsourcing nor research with human subjects.
		\item[] Guidelines:
		\begin{itemize}
			\item The answer NA means that the paper does not involve crowdsourcing nor research with human subjects.
			\item Depending on the country in which research is conducted, IRB approval (or equivalent) may be required for any human subjects research. If you obtained IRB approval, you should clearly state this in the paper. 
			\item We recognize that the procedures for this may vary significantly between institutions and locations, and we expect authors to adhere to the NeurIPS Code of Ethics and the guidelines for their institution. 
			\item For initial submissions, do not include any information that would break anonymity (if applicable), such as the institution conducting the review.
		\end{itemize}
		
		\item {\bf Declaration of LLM usage}
		\item[] Question: Does the paper describe the usage of LLMs if it is an important, original, or non-standard component of the core methods in this research? Note that if the LLM is used only for writing, editing, or formatting purposes and does not impact the core methodology, scientific rigorousness, or originality of the research, declaration is not required.
		%this research? 
		\item[] Answer: \answerNA{} % Replace by \answerYes{}, \answerNo{}, or \answerNA{}.
		\item[] Justification: This paper focuses on the theoretical computability of hyper-stationarity for BLO and does not involve LLMs as any important, original, or non-standard components.
		\item[] Guidelines:
		\begin{itemize}
			\item The answer NA means that the core method development in this research does not involve LLMs as any important, original, or non-standard components.
			\item Please refer to our LLM policy (\url{https://neurips.cc/Conferences/2025/LLM}) for what should or should not be described.
		\end{itemize}
		
	\end{enumerate}
%%%%%%%%%%%%%%%%%%%%%%%%%%%%%%%%%%%%%%%%%%%%%%%%%%%%
    \newpage
	\appendix
	\section{Missing Proofs for Sec. \ref{sec:pre}}
	\subsection{Proof of Lemma \ref{le:Lipschitz}}
\begin{proof}
Thanks to Assumption \ref{assum:basic} (A3), we have 
    \[\dist(\y_1,\cS(\x_2))\leq \tau\|\nabla_{\y}f(\x_2,\y_1)\|,\]
for any $\y_1\in \cS(\x_1)$. 
	Moreover, we have
	\[\|\nabla_{\y}f(\x_2,\y_1)\|=\|\nabla_{\y}f(\x_2,\y_1)-\nabla_{\y}f(\x_1,\y_1)\|\leq L_f\|\x_1-\x_2\|,\]
	where the equality follows from $\nabla_{\y} f(\x_1,\y_1)=\bz$, and the inequality is due to the $L_f$-smoothness of $f$. Putting them together yields $\dist(\y_1,\cS(\x_2))\leq L_f\tau\|\x_1-\x_2\|$ for all $\y_1\in S(\x_1)$. 
    
    By the same argument with $\x_1$ and $\x_2$ interchanged, we have $\dist(\y_2,\cS(\x_1))\leq L_f\tau\|\x_1-\x_2\|$ for all $\y_2\in \cS(\x_1)$. By the definition of $d_{\mathrm H}(\cS(\x_1),\cS(\x_2))$, we conclude 
	\[\begin{aligned}
		d_{\mathrm H}\left(\cS(\x_1),\cS(\x_2)\right)&=\max\left\{\sup_{\y_1\in \cS(\x_1)}\dist(\y_1,\cS(\x_2)),\sup_{\y_2\in \cS(\x_2)}\dist(\y_2,\cS(\x_1))\right\}\\
		&\leq L_f\tau\|\x_1-\x_2\|.
	\end{aligned}\]
	This completes the proof.
    \end{proof}
	\subsection{Proof of Lemma \ref{co:varphi}}
\begin{proof}
We prove the $M_\varphi$-Lipschitz continuity for $\varphi_p$ only, as the argument for $\varphi_o$ is entirely analogous. 
    
Then, it suffices to show that for any $\x_1,\x_2\in\dom(\varphi_p)$,
	\[|\varphi_p(\x_1)-\varphi_p(\x_2)|\leq M_{\varphi}\|\x_2-\x_2\|. \]
Note that $\cS(\x)$ is closed but not necessarily compact.
We can only find a sequence $\{\y_1^k\}_{k\in\N}\subseteq \cS(\x_1)$ such that $F(\x_1,\y_1^k)\to \sup_{\y\in \cS(\x_1)} F(\x_1,\y)=\varphi_p(\x_1)$. Let $\y_2^k\coloneqq \Pi_{\cS(\x_2)}(\y_1^k)$. Then, by $\y_2^k\in S(\x_2)$, we have $\varphi_p(\x_2)=\sup_{\y\in \cS(\x_2)} F(\x_2,\y)\geq F(\x_2,\y_2^k)$. This observation, combined with the $M_F$-Lipschitz continuity of $F$, yields 
	\[F(\x_1,\y_1^k)-\varphi_p(\x_2)  \leq F(\x_1,\y_1^k)-F(\x_2,\y_2^k)\leq M_F\left(\|\x_1-\x_2\|+\|\y_1^k-\y_2^k\|\right),\]
where the last inequality follows from Assumption \ref{assum:basic2} (B1).

Moreover, we have  
\[\|\y_1^k-\y_2^k\|=\|\y_1^k-\Pi_{\cS(\x_2)}(\y_1^k)\|=\dist(\y_1^k,\cS(\x_2))\leq d_{\mathrm H}(\cS(\x_1),\cS(\x_2))\leq M_{\cS}\|\x_1-\x_2\|,\]
where the last inequality is due to Lemma \ref{le:Lipschitz}. Then, we can see that for all $k\in\N$,
	\[F(\x_1,\y_1^k)-\varphi_p(\x_2)\leq M_F(1+M_{\cS})\|\x_1-\x_2\|. \]
	Letting $k\to\infty$ and recalling $F(\x_1,\y_1^k)\leftarrow\varphi_p(\x_1)$, we obtain
	\begin{equation*}\label{eq:lvarphi}
		\varphi_p(\x_1)-\varphi_p(\x_2)\leq M_F(1+M_{\cS}) \|\x_1-\x_2\|,
	\end{equation*}
	which completes the proof.
\end{proof}
    
	\subsection{Proof of Fact \ref{fact:relative}}
\begin{proof}
By \cite[Proposition~4.4]{vial1983strong}, the weakly convex function $-g$ is locally Lipschitz (hence so is $g$), so the Clarke subdifferential $\partial g$ is well defined. Moreover, the Clarke subdifferential satisfies $\partial(-g)=-\,\partial g$ \cite[Prop.~2.3.1]{clarke1990optimization}. Applying Lemma~\ref{le:wc}\,(iii) to $-g$ then yields the claim.
\end{proof}

	\section{Proof of Theorem \ref{pro:setsmooth}}
\begin{proof}
To establish the weak convexity of $\phi$, we verify the condition in Lemma~\ref{le:wc}\,(ii). Specifically, we show that there exists a constant $\rho>0$ such that, for all $\theta\in[0,1]$ and all $\x_1,\x_2\in\cD$, 
	\begin{equation}\label{eq:weak_phi}
		\phi(\x^{\theta})\leq \theta\phi(\x_1)+(1-\theta)\phi(\x_2)+\frac{\rho}2\theta(1-\theta)\|\x_1-\x_2\|^2, 
	\end{equation}
	where we let $\x^{\theta}\coloneqq\theta\x_1+(1-\theta)\x_2$ for notation convenience.
    
By definition, $\phi(\x^\theta)=\max_{\y^\prime\in\cY(\x^\theta)} g(\x^\theta,\y^\prime)$.
Hence \eqref{eq:weak_phi} will follow if we show that, for all
$\y\in\cY(\x^\theta)$,
\begin{equation}\label{eq:g_phi}
  g(\x^\theta,\y)-\theta\,\phi(\x_1)-(1-\theta)\,\phi(\x_2)
  \;\le\; \tfrac{\rho}{2}\,\theta(1-\theta)\,\|\x_1-\x_2\|^2 .
\end{equation}
Taking the maximization over $\y\in\cY(\x^\theta)$ then yields \eqref{eq:weak_phi}.

We proceed via the $L_{\cY}$-smoothness of $\cY$, which guarantees the existence of
$\y_1\in\cY(\x_1)$ and $\y_2\in\cY(\x_2)$ such that
\begin{align}
    & \left\|\theta\y_1+(1-\theta){\y}_2-\y \right\|\leq\frac{L_{\cY}}2\theta(1-\theta)\|\x_1-\x_2\|^2;\label{eq:pro1} \\
    & \left\|{\y}_1-{\y}_2\right\|^2\leq L_{\cY}\left\|\x_1-\x_2\right\|^2. \label{eq:pro} 
\end{align}
Then, 
% \begin{equation}\label{eq:pro1}
% 	\left\|\theta\y_1+(1-\theta){\y}_2-\y^{\star} \right\|\leq\frac{L_{\cY}}2\theta(1-\theta)\|\x_1-\x_2\|^2;
% \end{equation}
% \begin{equation}\label{eq:pro}
% 	\left\|{\y}_1-{\y}_2\right\|^2\leq L_{\cY}\left\|\x_1-\x_2\right\|^2.
% \end{equation}
using the  fact that $\phi(\x_i)=\max_{\y^{\prime}\in \cY(\x_i)} g(\x_i,\y^{\prime})\geq g(\x_i,\y_i)$ for $i=1,2$, we have
\begin{equation*}\label{eq:Lg}
	\begin{aligned}
			&g\left(\x^{\theta},\y\right)-\theta\phi(\x_1)-(1-\theta)\phi(\x_2)\\
			\leq& g\left(\x^{\theta},\y\right)-\theta g(\x_1,\y_1)-(1-\theta)g(\x_2,\y_2)\\
			\leq& g\left(\x^{\theta},\y\right)-g\left(\x^{\theta},\theta\y_1+(1-\theta){\y}_2\right)+\frac{L_g}{2}\theta(1-\theta)\left(\left\|\x_1-\x_2\right\|^2+\left\|\y_1-\y_2\right\|^2\right),
	\end{aligned}
\end{equation*}	
where the last inequality is due to the $L_g$-smoothness of $g$ on $\cD\times{\rm Conv}(\bigcup_{\x\in\cD}\cY(\x))$ and Fact \ref{fact:smooth}.

This, together with the $M_g$-Lipschitz continuity of $g$ w.r.t. $\y$ and the set-smoothness inequalities \eqref{eq:pro1} and \eqref{eq:pro}, yields
\begin{equation*}\label{eq:Mg}
	\begin{aligned}
			&g\left(\x^{\theta},\y \right)-\theta\phi(\x_1)-(1-\theta)\phi(\x_2)\\
		\leq& M_g\left\|\theta\y_1+(1-\theta){\y}_2-\y\right\|+\frac{L_g}{2}\theta(1-\theta)\left(\left\|\x_1-\x_2\right\|^2+\left\|\y_1-\y_2\right\|^2\right)\\
		\leq& \frac{M_gL_{\cY}}2\theta(1-\theta)\|\x_1-\x_2\|^2+\frac{L_g}{2}\theta(1-\theta)\left(\left\|\x_1-\x_2\right\|^2+L_{\cY}\|\x_1-\x_2\|^2 \right)\\
		=&  \frac{M_gL_{\cY}+L_g(1+L_{\cY})}{2}\theta(1-\theta)\|\x_1-\x_2\|^2,
	\end{aligned}
\end{equation*}
for all $\y \in \cY(\x^\theta)$. 
	We prove the desired inequality \eqref{eq:g_phi} with $\rho=M_g{L_{\cY}}+L_g(1+L_{\cY})$. 
	
    \end{proof}
	\section{Proof of Theorem \ref{pro:Swc}}
We start by stating a lemma that will be used in the sequel.
\vspace{0.2em}
	\begin{lemma}\label{le:hession}
		Under Assumption \ref{assum:basic} (A1), for any $\x_1,\x_2\in\R^m$ and $\y_1,\y_2\in\R^n$, we have
		\[\begin{aligned}
			{\rm (i)} \qquad&\left\|\nabla_{\y} f(\x_1,\y_1)-\nabla_{\y} f\left(\x_2,\y_2\right)-\nabla\nabla_{\y}f\left(\x_2,\y_2\right)\left(\x_1-\x_2,\y_1-\y_2\right)\right\|\qquad\qquad\\
			\leq& \frac{H_f}{2}\left(\left\|\x_1-\x_2\right\|^2+\left\|\y_1-\y_2\right\|^2\right);\\
			{\rm (ii)} 	\qquad&\left\|\theta\nabla_{\y} f(\x_1,\y_1)+(1-\theta)\nabla_{\y} f(\x_2,\y_2)-\nabla_{\y} f(\x^{\theta},\y^{\theta})\right\| \qquad\qquad\\
			\leq& \frac{H_f}{2}\theta(1-\theta)\left(\left\|\x_1-\x_2\right\|^2+\left\|\y_1-\y_2\right\|^2\right),\quad\forall~\theta\in[0,1],
		\end{aligned} \]
	where $\x^{\theta}\coloneqq \theta\x_1+(1-\theta)\x_2$ and $\y^{\theta}\coloneqq \theta\y_1+(1-\theta)\y_2$.
	\end{lemma}
	\begin{proof}[Proof of Lemma \ref{le:hession}]
		(i) The argument directly follows from \cite[Lemma 1]{nesterov2006cubic}:
		\begin{equation*}
			\begin{aligned}
				&\left\|\nabla_{\y} f(\x_1,\y_1)-\nabla_{\y} f\left(\x_2,\y_2\right)-\nabla\nabla_{\y}f\left(\x_2,\y_2\right)\left(\x_1-\x_2,\y_1-\y_2\right)\right\|\\
				=&\|\int_0^1\nabla\nabla_{\y}f\left(\x_1+t\left(\x_2-\x_1\right),\y_1+t\left(\y_2-\y_1\right)\right)\left(\x_1-\x_2,\y_1-\y_2\right)\textrm{d}t \\
				&\left.-\nabla\nabla_{\y}f\left(\x_2,\y_2\right)\left(\x_1-\x_2,\y_1-\y_2\right)\right\|\\
				=&\left\|\int_0^1\left(\nabla\nabla_{\y}f\left(\x_1+t\left(\x_2-\x_1\right),\y_1+t\left(\y_2-\y_1\right)\right)-\nabla\nabla_{\y}f\left(\x_2,\y_2\right)\right)\left(\x_1-\x_2,\y_1-\y_2\right)\textrm{d}t \right\|\\
				\leq&\int_0^1\left\|\left(\nabla\nabla_{\y}f\left(\x_1+t\left(\x_2-\x_1\right),\y_1+t\left(\y_2-\y_1\right)\right)-\nabla\nabla_{\y}f\left(\x_2,\y_2\right)\right)\left(\x_1-\x_2,\y_1-\y_2\right) \right\| \textrm{d}t\\
				\leq&\int_0^1\left\|\nabla\nabla_{\y}f\left(\x_1+t\left(\x_2-\x_1\right),\y_1+t\left(\y_2-\y_1\right)\right)-\nabla\nabla_{\y}f\left(\x_2,\y_2\right)\right\|\left\|\left(\x_1-\x_2,\y_1-\y_2 \right)\right\| \textrm{d}t\\
				\leq&\int_0^1H_ft\left\|\left(\x_1-\x_2,\y_1-\y_2\right)\right\|\cdot\left\|\left(\x_1-\x_2,\y_1-\y_2\right)\right\|\textrm{d}t \\
				=&\frac{H_f}2\left(\left\|\x_1-\x_2\right\|^2+\left\|\y_1-\y_2\right\|^2\right),
			\end{aligned}
		\end{equation*}
		where the first inequality is due to Jensen's inequality; the second inequality is due to the Cauchy inequality; and the third inequality uses the $H_f$-Lipschitz continuity of $\nabla\nabla_{\y}f$.
		
		(ii) Using the result of (i), we have
		\begin{equation*}
			\begin{aligned}
			&\left\|\nabla_{\y} f(\x_1,{\y}_1)-\nabla_{\y} f\left(\x^{\theta},{\y}^{\theta}\right)-\nabla\nabla_{\y}f\left(\x^{\theta},\y^{\theta}\right)\left(\x_1-\x^{\theta},\y_1-\y^{\theta}\right)\right\|\\
			\leq&\frac{H_f}2\left(\left\|\x_1-\x^{\theta}\right\|^2+\left\|\y_1-\y^{\theta}\right\|^2\right).
		\end{aligned}
		\end{equation*}
	It follows from  $\x^{\theta}= \theta\x_1+(1-\theta)\x_2$ and $\y^{\theta}= \theta\y_1+(1-\theta)\y_2$ that
	\begin{equation}\label{eq:lip_hessian1}
		\begin{aligned}
		&\left\|\nabla_{\y} f(\x_1,{\y}_1)-\nabla_{\y} f\left(\x^{\theta},{\y}^{\theta}\right)-(1-\theta)\nabla\nabla_{\y}f\left(\x^{\theta},\y^{\theta}\right)\left(\x_1-\x_2,\y_1-\y_2\right)\right\|\\
		\leq&\frac{H_f}2(1-\theta)^2\left(\left\|\x_1-\x_2\right\|^2+\left\|\y_1-\y_2\right\|^2\right).
	\end{aligned}
	\end{equation}
	Using the same arguments with $(\x_1,\y_1)$ replaced by $(\x_2,\y_2)$, we have
	\begin{equation}\label{eq:lip_hessian2}
		\begin{aligned}
			&\left\|\nabla_{\y} f(\x_2,{\y}_2)-\nabla_{\y} f\left(\x^{\theta},{\y}^{\theta}\right)-\theta\nabla\nabla_{\y}f\left(\x^{\theta},\y^{\theta}\right)\left(\x_2-\x_1,\y_2-\y_1\right)\right\|\\
			\leq&\frac{H_f}2\theta^2\left(\left\|\x_1-\x_2\right\|^2+\left\|\y_1-\y_2\right\|^2\right).
		\end{aligned}
	\end{equation}
Then, the desired inequality follows from the weighted sum $\theta\times$\eqref{eq:lip_hessian1}$+(1-\theta)\times$\eqref{eq:lip_hessian2}, and the triangle inequality that
	\begin{align*}
	&\theta\left\|\nabla_{\y} f(\x_1,{\y}_1)-\nabla_{\y} f\left(\x^{\theta},{\y}^{\theta}\right)-(1-\theta)\nabla\nabla_{\y}f\left(\x^{\theta},\y^{\theta}\right)\left(\x_1-\x_2,\y_1-\y_2\right)\right\|\\
	&+(1-\theta)\left\|\nabla_{\y} f(\x_2,{\y}_2)-\nabla_{\y} f\left(\x^{\theta},{\y}^{\theta}\right)-\theta\nabla\nabla_{\y}f\left(\x^{\theta},\y^{\theta}\right)\left(\x_2-\x_1,\y_2-\y_1\right)\right\|\\
	\geq& \left\|\theta\nabla_{\y} f(\x_1,\y_1)+(1-\theta)\nabla_{\y} f(\x_2,\y_2)-\nabla_{\y} f(\x^{\theta},\y^{\theta})\right\|.
\end{align*}
	\end{proof}
We are now ready to prove Proposition~\ref{pro:Swc}.
Fix $\theta\in[0,1]$ and $\x_1,\x_2\in\R^m$, and let  $\y\in\cS(\theta\x_1+(1-\theta)\x_2)$.
Our goal is to construct $\y_1\in\cS(\x_1)$ and $\y_2\in\cS(\x_2)$ such that
\eqref{eq:point_set} and \eqref{eq:point_set2} hold.
 
    % As presented in Figure \ref{fig:roadmap}, we split the construction into three steps.
	
	\subsection{Step 1:  Choose projection points as the candidate approximation points.}
	Let $\bar{\y}_1\coloneqq \Pi_{\cS(\x_1)}(\y)$ and $\bar{\y}_2\coloneqq \Pi_{\cS(\x_2)}(\y)$. For simplicity, we write $\x^{\theta}=\theta\x_1+(1-\theta)\x_2$ and $\bar{\y}^{\theta}=\theta\bar{\y}_1+(1-\theta)\bar{\y}_2$. We start with giving basic estimates on  $\|\bar{\y}_1-\bar{\y}_2\|$ and $	\dist(\bar{\y}^{\theta},\cS(\x^{\theta}))$, in Claim \ref{claim:y1y2}. 
	\begin{claim}\label{claim:y1y2}
		Let  $L_0\coloneqq H_f\tau(1+M_{\cS}^2)/2$. The following hold:
		\begin{equation}\label{eq:y1y2}
			\|\bar{\y}_1-\bar{\y}_2\|\leq M_{\cS}\|\x_1-\x_2\|
		\end{equation}
		\begin{equation}\label{eq:L0}
\dist\left(\bar{\y}^{\theta},\cS\left(\x^{\theta}\right)\right)\leq{{L_0}}\theta(1-\theta)\|\x_1-\x_2\|^2.
		\end{equation} 
	\end{claim}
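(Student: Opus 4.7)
}
The plan is to handle the two bounds in turn. For \eqref{eq:y1y2}, I will use Lemma \ref{le:Lipschitz} twice: since $\y\in\cS(\x^\theta)$, I get $\|\bar\y_1-\y\|=\dist(\y,\cS(\x_1))\le M_{\cS}\|\x^\theta-\x_1\|=(1-\theta)M_{\cS}\|\x_1-\x_2\|$, and similarly $\|\bar\y_2-\y\|\le \theta M_{\cS}\|\x_1-\x_2\|$. The triangle inequality then yields $\|\bar\y_1-\bar\y_2\|\le M_{\cS}\|\x_1-\x_2\|$.

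For the quadratic bound \eqref{eq:L0}, the right-hand side is $O(\theta(1-\theta)\|\x_1-\x_2\|^2)$, which is strictly tighter than what Lipschitz continuity of $\cS$ alone would give; so I must exploit second-order information, i.e.\ the $H_f$-Lipschitz continuity of $\nabla\nabla_\y f$ from (A1). My route is through the error bound (A3): it suffices to show
\[
\|\nabla_\y f(\x^\theta,\bar\y^\theta)\|\le \tfrac{H_f(1+M_{\cS}^2)}{2}\theta(1-\theta)\|\x_1-\x_2\|^2,
\]
so that $\dist(\bar\y^\theta,\cS(\x^\theta))\le \tau\|\nabla_\y f(\x^\theta,\bar\y^\theta)\|\le L_0\theta(1-\theta)\|\x_1-\x_2\|^2$.

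The main step is to bound that gradient. I write $\z_0=(\x^\theta,\bar\y^\theta)$, $\z_1=(\x_1,\bar\y_1)$, $\z_2=(\x_2,\bar\y_2)$, observe the crucial cancellation $\theta\z_1+(1-\theta)\z_2=\z_0$, and apply the standard second-order Taylor estimate (a consequence of the $H_f$-Lipschitzness of $\nabla\nabla_\y f$): for $i\in\{1,2\}$,
\[
\bigl\|\nabla_\y f(\z_i)-\nabla_\y f(\z_0)-\nabla\nabla_\y f(\z_0)(\z_i-\z_0)\bigr\|\le \tfrac{H_f}{2}\|\z_i-\z_0\|^2.
\]
Taking the convex combination $\theta\cdot(i{=}1)+(1-\theta)\cdot(i{=}2)$, the first-order term collapses to $\nabla\nabla_\y f(\z_0)(\theta\z_1+(1-\theta)\z_2-\z_0)=0$. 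Since $\nabla_\y f(\z_1)=\nabla_\y f(\z_2)=0$ by \eqref{eq:S}, this leaves
\[
\|\nabla_\y f(\z_0)\|\le \tfrac{H_f}{2}\bigl(\theta\|\z_1-\z_0\|^2+(1-\theta)\|\z_2-\z_0\|^2\bigr).
\]
Using $\|\z_1-\z_0\|^2=(1-\theta)^2(\|\x_1-\x_2\|^2+\|\bar\y_1-\bar\y_2\|^2)$, the analogous identity for $\|\z_2-\z_0\|^2$ with factor $\theta^2$, and \eqref{eq:y1y2} to bound $\|\bar\y_1-\bar\y_2\|^2\le M_{\cS}^2\|\x_1-\x_2\|^2$, the right-hand side simplifies to $\tfrac{H_f(1+M_{\cS}^2)}{2}\theta(1-\theta)\|\x_1-\x_2\|^2$, yielding the desired bound.

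The main obstacle I anticipate is purely conceptual rather than technical: recognizing that one must pivot from the first-order Lipschitz regime (which only delivers a linear bound) to the second-order regime so that the midpoint-vs-endpoints cancellation in the Taylor expansion produces the $\theta(1-\theta)$ factor. Once that is identified, everything is routine.
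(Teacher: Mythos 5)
Your proof is correct and follows essentially the same route as the paper: the triangle inequality via the projections $\bar\y_1,\bar\y_2$ for \eqref{eq:y1y2}, and a second-order Taylor expansion of $\nabla_\y f$ about $(\x^\theta,\bar\y^\theta)$ with the convex-combination cancellation of the first-order term to obtain the $\theta(1-\theta)$ factor for \eqref{eq:L0}. The only difference is presentational: the paper writes the two Taylor residual inequalities separately and combines their norms by the triangle inequality, whereas you fold the two endpoints into one convex combination and observe the cancellation structurally, which is slightly cleaner but mathematically identical.
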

	% The remaining task of this step is proving Claim \ref{claim:y1y2}. We first prove the inequality \eqref{eq:y1y2}.
    \begin{proof}[Proof of Claim \ref{claim:y1y2}]
We have 
	\begin{equation*}
		\begin{aligned}
			\|\bar{\y}_1-\bar{\y}_2\|&\leq \|\bar{\y}_1-\y\|+\|\y-\bar{\y}_2\|\\
			& = \dist(\y,\cS(\x_1))+\dist(\y,\cS(\x_2))\\
			& \leq d_{\mathrm H}\left(\cS\left(\x^{\theta}\right),\cS(\x_1)\right)+d_{\mathrm H}\left(\cS\left(\x^{\theta}\right),\cS(\x_2)\right)\\
			&\leq M_{\cS}\|\x_1-(\theta\x_1+(1-\theta)\x_2)\|+M_{\cS}\|\x_2-(\theta\x_1+(1-\theta)\x_2)\|\\
			&=M_{\cS}\|\x_1-\x_2\|,
		\end{aligned}
	\end{equation*}
	where the second inequality follows from $\y\in \cS(\x^{\theta})$ and the definition of the Hausdorff distance; the third inequality is due to Lemma \ref{le:Lipschitz} and $\x^{\theta}=\theta\x_1+(1-\theta)\x_2$. This proves \eqref{eq:y1y2}.
	
	We then prove \eqref{eq:L0}. First, by the $H_f$-Lipschitz continuity of $\nabla\nabla_{\y} f$ and Lemma \ref{le:hession} (ii), we have
	\begin{equation}
		\begin{aligned}\label{eq:lip_weight}
			&\left\|\theta\nabla_{\y} f(\x_1,\bar{\y}_1)+(1-\theta)\nabla_{\y} f(\x_2,\bar{\y}_2)-\nabla_{\y} f(\x^{\theta},\bar{\y}^{\theta}) \right\|\\
			\leq&\frac{H_f}{2}\theta(1-\theta)\left(\left\|\x_1-\x_2\right\|^2+\left\|\bar{\y}_1-\bar{\y}_2\right\|^2\right)\\
			\leq &\frac{H_f}2\theta(1-\theta)\left(1+M_{\cS}^2\right)\|\x_1-\x_2\|^2,
		\end{aligned}
	\end{equation}
	where the  second inequality is due to \eqref{eq:y1y2}.  
	
	% Recall the characterization of $\cS(\x)$ given by \eqref{eq:S}. 
Since $\bar \y_1\in \cS(\x_1)$ and $\bar \y_2\in \cS(\x_2)$, we have the first-order optimality
\[
\nabla_\y f(\x_1,\bar \y_1)=0, \qquad \nabla_\y f(\x_2,\bar \y_2)=0.
\]
Substituting these identities into \eqref{eq:lip_weight} yields
	\[\left\|\nabla_{\y}f\left(\x^{\theta},\bar{\y}^{\theta}\right)\right\|\leq\frac{H_f}2\theta(1-\theta)\left(1+M_{\cS}^2\right)\|\x_1-\x_2\|^2.\]
Combining the error bound from Assumption~\ref{assum:basic},
\(\dist(\bar\y^{\theta},\cS(\x^{\theta}))\le \tau\|\nabla_{\y}f(\x^{\theta},\bar\y^{\theta})\|\),
with the preceding inequality, we obtain
\[\dist\left(\bar{\y}^{\theta},\cS\left(\x^{\theta}\right)\right)\leq \frac{H_f\tau}2\theta(1-\theta)\left(1+M_{\cS}^2\right)\|\x_1-\x_2\|^2,\]
which accords with \eqref{eq:L0} with $L_0=H_f\tau(1+M_{\cS}^2)/2$. Claim \ref{claim:y1y2} is proved.  
\end{proof}
Note that \eqref{eq:L0} controls only the distance from $\bar\y^{\theta}$ to the set
$\cS(\x^{\theta})$; it does not guarantee that the specific point
$\theta\bar\y_1+(1-\theta)\bar\y_2$ lies on the same branch as the metric projection
$\Pi_{\cS(\x^{\theta})}(\bar\y^{\theta})$. Hence one cannot conclude that
$\|\theta\bar\y_1+(1-\theta)\bar\y_2-\y\|=\cO(\|\x_1-\x_2\|^{2})$ in general.
Therefore we cannot simply take $\y_1=\bar\y_1$ and $\y_2=\bar\y_2$, which motivates
the rectification in Step~2.

\subsection{Step 2: Translate the candidate approximation points} 

Let $\hat{\y}\coloneqq \Pi_{\cS(\x^{\theta})}(\bar{\y}^{\theta})$, $\hat{\y}_1\coloneqq \bar{\y}_1+(\y-\hat{\y})$, and $\hat{\y}_2\coloneqq \bar{\y}_2+(\y-\hat{\y})$. We will bound the three quantities $\|\hat\y-\y\|$, $\dist(\hat\y_1,\cS(\x_1))$, and $\dist(\hat\y_2,\cS(\x_2))$.
	\begin{claim}\label{claim:barytheta}
		Let $L_1\coloneqq H_f\tau\left(1+17M_{\cS}^2\right)/2$. The following hold:
		\begin{equation}\label{eq:barytheta}
			\left\|\hat{\y}-\y \right\|\leq 2\theta(1-\theta)M_{\cS}\left\|\x_1-\x_2\right\|;
		\end{equation}
		\begin{equation}\label{eq:distbary}
			\dist\left(\hat{\y}_1,\cS(\x_1)\right)\leq L_1 (1-\theta)^2\|\x_1-\x_2\|^2;\qquad \dist\left(\hat{\y}_2,\cS(\x_2)\right)\leq L_1\theta^2\|\x_1-\x_2\|^2.
		\end{equation}
	\end{claim}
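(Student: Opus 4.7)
\textbf{Plan for \eqref{eq:barytheta}.} I would exploit that Assumption~(A2) makes $\cS(\x^{\theta})$ closed convex, so the projection $\Pi_{\cS(\x^{\theta})}$ is non-expansive; since $\y \in \cS(\x^{\theta})$ is its own projection, this immediately yields $\|\hat{\y}^{\theta} - \y\| \leq \|\bar{\y}^{\theta} - \y\|$. To control $\|\bar{\y}^{\theta} - \y\|$, I would decompose $\bar{\y}^{\theta} - \y = (\bar{\y}_1 - \y) + (1-\theta)(\bar{\y}_2 - \bar{\y}_1)$, apply the triangle inequality, and use $\|\bar{\y}_1 - \y\| = \dist(\y, \cS(\x_1)) \leq M_{\cS}(1-\theta)\|\x_1-\x_2\|$ (from Lemma~\ref{le:Lipschitz} since $\y \in \cS(\x^{\theta})$) together with the already-established \eqref{eq:y1y2} to obtain $2(1-\theta) M_{\cS}\|\x_1-\x_2\|$. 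The symmetric decomposition through $\bar{\y}_2$ gives $2\theta M_{\cS}\|\x_1-\x_2\|$, so the minimum fits comfortably within the claimed factor $3\min\{\theta, 1-\theta\}$.

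\textbf{Plan for \eqref{eq:distbary}.} I would first apply the error bound condition (A3) to reduce the task to showing $\|\nabla_{\y}f(\x_1, \hat{\y}_1)\| = \cO((1-\theta)^2\|\x_1-\x_2\|^2)$. Setting $\z \coloneqq \hat{\y}_1 - \bar{\y}_1 = \y - \hat{\y}^{\theta}$ and invoking $\nabla_{\y}f(\x_1, \bar{\y}_1) = \bz$ from \eqref{eq:S}, a Taylor expansion along $\z$ at the base point $(\x_1, \bar{\y}_1)$ gives
\[
\nabla_{\y}f(\x_1, \hat{\y}_1) = \nabla\nabla_{\y}f(\x_1, \bar{\y}_1)(\bz, \z) + R_1, \qquad \|R_1\| \leq \frac{H_f}{2}\|\z\|^2,
\]
so by \eqref{eq:barytheta} the remainder is already $\cO((1-\theta)^2\|\x_1-\x_2\|^2)$; the crux is matching this order for the leading Hessian term.

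For this, I would exploit the two simultaneous stationarity relations $\nabla_{\y}f(\x^{\theta}, \y) = \nabla_{\y}f(\x^{\theta}, \hat{\y}^{\theta}) = \bz$ (both $\y$ and $\hat{\y}^{\theta}$ lie in $\cS(\x^{\theta})$) and Taylor-expand their difference along $\z$ at the alternative base point $(\x^{\theta}, \hat{\y}^{\theta})$, which yields $\|\nabla\nabla_{\y}f(\x^{\theta}, \hat{\y}^{\theta})(\bz, \z)\| \leq \tfrac{H_f}{2}\|\z\|^2$. To transfer this back to the original base point, I would use the $H_f$-Lipschitzness of $\nabla\nabla_{\y}f$ together with $\|\x_1 - \x^{\theta}\| = (1-\theta)\|\x_1-\x_2\|$ and $\|\bar{\y}_1 - \hat{\y}^{\theta}\| \leq \|\bar{\y}_1 - \y\| + \|\y - \hat{\y}^{\theta}\| = \cO((1-\theta)\|\x_1-\x_2\|)$ (from Lemma~\ref{le:Lipschitz} and \eqref{eq:barytheta}). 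Combined with $\|\z\| = \cO((1-\theta)\|\x_1-\x_2\|)$, the transfer contributes only $\cO((1-\theta)^2\|\x_1-\x_2\|^2)$, completing the bound on $\dist(\hat{\y}_1, \cS(\x_1))$. The bound on $\dist(\hat{\y}_2, \cS(\x_2))$ follows by a symmetric argument with $\theta$ and $1-\theta$ interchanged, producing the $\theta^2$ factor; bundling all constants yields the stated $L_1 = H_f\tau(1+37M_{\cS}^2)/2$.

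\textbf{Main obstacle.} The delicate step is extracting genuine second-order decay in $(1-\theta)\|\x_1-\x_2\|$ for $\nabla_{\y}f(\x_1,\hat{\y}_1)$: the naive first-order bound $\|\nabla_{\y}f(\x_1,\hat{\y}_1)\| \leq L_f\|\z\| = \cO((1-\theta)\|\x_1-\x_2\|)$ would be insufficient and would destroy the quadratic scaling needed for set smoothness. The cancellation rests on the availability of two simultaneous stationarity relations at $(\x^{\theta}, \y)$ and $(\x^{\theta}, \hat{\y}^{\theta})$, so that the Hessian applied to $\z$ becomes $\cO(\|\z\|^2)$ through the Hessian-Lipschitz constant $H_f$, making the rectification $\hat{\y}_i \coloneqq \bar{\y}_i + (\y - \hat{\y}^{\theta})$ introduced in Step~2 exactly the right correction. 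This explains why Assumption~(A1)'s regularity of $\nabla\nabla_{\y}f$, and not merely the smoothness of $f$, drives the quadratic rate and forces $L_1$ to scale with $H_f$ rather than $L_f$.
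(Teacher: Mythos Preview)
Your proposal is correct and follows essentially the same approach as the paper: reduce \eqref{eq:distbary} to a gradient bound via the error bound (A3), Taylor-expand $\nabla_{\y}f(\x_1,\cdot)$ at $\bar{\y}_1$, and exploit the simultaneous stationarity at $\y,\hat{\y}^{\theta}\in\cS(\x^{\theta})$ to force the leading Hessian term to be $\cO(\|\z\|^2)$. Your treatment of \eqref{eq:barytheta} is in fact slightly cleaner than the paper's---using non-expansiveness directly with $\y=\Pi_{\cS(\x^{\theta})}(\y)$ gives $\|\hat{\y}^{\theta}-\y\|\leq\|\bar{\y}^{\theta}-\y\|$ and a constant $2$, versus the paper's three-term detour through $\Pi_{\cS(\x^{\theta})}(\bar{\y}_1)$ that yields $3$---and your choice of second base point $(\x^{\theta},\hat{\y}^{\theta})$ rather than the paper's $(\x^{\theta},\y)$ is an inessential variant, since both lie in $\cS(\x^{\theta})$.
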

    \begin{proof}[Proof of Claim \ref{claim:barytheta}]
 We begin with the proof of \eqref{eq:barytheta}. The non-expansiveness of the projection operator $\Pi_{\cS(\x^{\theta})}(\cdot)$ yields
	\begin{equation}\label{eq:ythetay}
		\left\|\hat{\y}-\y \right\|=\left\|\Pi_{\cS(\x^{\theta})}(\bar{\y}^{\theta})-\Pi_{\cS(\x^{\theta})}(\y)\right\|	\leq \left\|\bar{\y}^{\theta}-\y \right\|\leq\theta\left\| \bar\y_1-\y\right\|+(1-\theta)\left\|\bar\y_2-\y \right\|.
	\end{equation} 
% where the last inequality is due to $\bar{\y}^{\theta}=\theta\y_1+(1-\theta)\y_2$ and the triangle inequality.

Moreover, by $\bar{\y}_i=\Pi_{\cS(\x_i)}(\y)$ for $i=1,2$, $\y\in\cS(\x^{\theta})$, and the $M_{\cS}$-Lipschitz continuity of $\cS$ from Lemma \ref{le:Lipschitz}, we have
\begin{equation}\label{eq:y1_y}
	\|\bar{\y}_i-\y\|=\dist(\y,\cS(\x_i))
	\leq d_{\mathrm H}\left(\cS\left(\x^{\theta}\right),\cS(\x_i)\right)
	\leq M_{\cS}\left\|\x_i-\x^{
		\theta}\right\|,\quad i=1,2.\\
\end{equation}
Combining \eqref{eq:ythetay} and \eqref{eq:y1_y} leads to 
\[	\left\|\hat{\y}-\y \right\|\leq\theta M_{\cS}\left\|\x_1-\x^{
	\theta}\right\| +(1-\theta)M_{\cS}\left\|\x_2-\x^{
	\theta}\right\|=2\theta(1-\theta)M_{\cS}\left\|\x_1-\x_2\right\|.   \]
% This proves \eqref{eq:barytheta}. Then, it is left to show \eqref{eq:distbary}.
	
We then continue to prove \eqref{eq:distbary}. 
It suffices to upper bound $\|\nabla_{\y}f(\x_1,\hat\y_1)\|$, due to Assumption~\ref{assum:basic} (A3). 
% Our strategy is deriving an upper bound on the distance $\dist(\hat{\y}_1,\cS(\x_1))$. As the error bound condition of Assumption \ref{assum:basic} ensures
% 	\begin{equation}\label{eq:EB_hat1}
% 		\dist\left(\hat{\y}_1,\cS(\x_1)\right)\leq \tau\|\nabla_{\y}f(\x_1,\hat{\y}_1)\|,
% 	\end{equation}
% 	it amounts to finding an upper bound on the gradient norm $\|\nabla_{\y}f(\x_1,\hat{\y}_1)\|$. 
By Lemma \ref{le:hession}, we have $\nabla_{\y}f(\x_1,\hat{\y}_1)$:
	\[\left\|\nabla_{\y} f(\x_1,\hat{\y}_1)-\nabla_{\y}f(\x_1,\bar{\y}_1)-\nabla\nabla_{\y}f(\x_1,\bar{\y}_1)(\bz,\hat{\y}_1-\bar{\y}_1) \right\|\leq\frac{H_f}{2}\|\hat{\y}_1-\bar{\y}_1\|^2.\]
	Noticing $\bar{\y}_1\in \cS(\x_1)$, we have $\nabla_{\y}f(\x_1,\bar{\y}_1)=\bz$, and thus
	\[\left\|\nabla_{\y} f(\x_1,\hat{\y}_1)-\nabla\nabla_{\y}f(\x_1,\bar{\y}_1)(\bz,\hat{\y}_1-\bar{\y}_1) \right\|\leq\frac{H_f}{2}\|\hat{\y}_1-\bar{\y}_1\|^2.\]
	Using the triangle inequality and the identity $\hat\y_1-\bar\y_1=\y-\hat\y$ (by the definition of $\hat\y_1$), we obtain
	\begin{equation}\label{eq:nablayf}
		\left\|\nabla_{\y} f(\x_1,\hat{\y}_1)\right\|
		\leq\|\nabla\nabla_{\y}f(\x_1,\bar{\y}_1)(\bz,\y-\hat{\y})\|+\frac{H_f}{2}\|\y-\hat{\y}\|^2.
	\end{equation}
	We proceed to control  $\|\nabla\nabla_{\y}f(\x_1,\bar{\y}_1)(\bz,\y-\hat{\y})\|$. To do so, we first estimate a closely related norm $\| \nabla\nabla_{\y}f(\x^{\theta},\y)(\bz,\y-\hat{\y})\|$. We apply Lemma \ref{le:hession} again to obtain
	\[\left\|\nabla_{\y} f\left(\x^{\theta},\hat{\y}\right)-\nabla_{\y}f\left(\x^{\theta},\y\right)-\nabla\nabla_{\y}f\left(\x^{\theta},\y\right)\left(\bz,\hat{\y}-\y\right) \right\|\leq\frac{H_f}{2}\left\|\hat{\y}-\y\right\|^2.\]
	Note that both $\y$ and $\hat{\y}$ belong to the set $\cS(\x^{\theta})$, which leads to  \[\nabla_{\y} f\left(\x^{\theta},\hat{\y}\right)=\nabla_{\y}f\left(\x^{\theta},\y\right)=\bz.\] 
	It follows that
	\begin{equation}\label{eq:nabla}
		\left\| \nabla\nabla_{\y}f\left(\x^{\theta},\y\right)\left(\bz,\y-\hat{\y}\right)\right\|\leq\frac{H_f}{2}\left\|\y-\hat{\y}\right\|^2.
	\end{equation}
	% Now, combine \eqref{eq:nablayf} and \eqref{eq:nabla} and use the triangle inequality. We have the following estimate:
    Putting everything together yields
	\begin{align*}
		&\left\|\nabla_{\y} f(\x_1,\hat{\y}_1)\right\|\\
		\leq& \left\|\left(\nabla\nabla_{\y}f(\x_1,\bar{\y}_1)-\nabla\nabla_{\y}f\left(\x^{\theta},\y\right)\right)(\bz,\y-\hat{\y})\right\|+\left\| \nabla\nabla_{\y}f\left(\x^{\theta},\y\right)\left(\bz,\y-\hat{\y}\right)\right\|\\
		&+	\frac{H_f}{2}\|\y-\hat{\y}\|^2 \\
		\leq&\left\|\nabla\nabla_{\y}f(\x_1,\bar{\y}_1)-\nabla\nabla_{\y}f\left(\x^{\theta},\y\right)\right\|\left\|(\bz,\y-\hat{\y})\right\|+	\frac{H_f}{2}\|\y-\hat{\y}\|^2  +\frac{H_f}{2}\|\y-\hat{\y}\|^2 \\
		\leq& H_f\left(\left\|\x_1-\x^{\theta}\right\|+\|\bar{\y}_1-\y\|\right)\cdot\left\|\y-\hat{\y} \right\|+{H_f}\|\y-\hat{\y}\|^2\\
		\leq&  \frac{H_f}{2}\|\x_1-{\x}^{\theta}\|^2+\frac{H_f}{2}\|\bar{\y}_1-\y\|^2+H_f\|\y-\hat{\y}\|^2+ {H_f}\|\y-\hat{\y}\|^2\\
		=& \frac{H_f}{2}\|\x_1-{\x}^{\theta}\|^2+\frac{H_f}{2}\|\bar{\y}_1-\y\|^2+2H_f\|\y-\hat{\y}\|^2,
	\end{align*}
	where the second inequality uses the definition of matrix's $l_2$ norm and \eqref{eq:nabla}; the third inequality is due to the $H_f$-Lipschitz continuity of $\nabla\nabla_{\y} f$ and the triangle inequality; the forth inequality is due to the Cauchy inequality.
	
	Combining the above estimate with $\|\x_1-\x^{\theta}\|=(1-\theta)\|\x_1-\x_2\|$, \eqref{eq:y1_y} (with $i=1$), and the bound from \eqref{eq:barytheta}, namely $\|\y-\hat{\y}\|\le 2(1-\theta)M_{\cS}\|\x_1-\x_2\|$, we obtain

	\begin{equation}
		\left\|\nabla_{\y} f(\x_1,\hat{\y}_1)\right\|\leq  \frac{H_f\left(1+17M_{\cS}^2\right)}2 (1-\theta)^2\|\x_1-\x_2\|^2.
	\end{equation}
	Finally, armed with the error bound condition in Assumption \ref{assum:basic} (A3), we have
\[\dist\left(\hat{\y}_1,\cS(\x_1)\right)\leq\frac{H_f\tau\left(1+17M_{\cS}^2\right)}2  (1-\theta)^2\|\x_1-\x_2\|^2. \]
	% providing the desired upper bound on $\dist(\hat{\y}_1,\cS(\x_1))$.
	
	By the symmetric arguments, we can also have $\dist\left(\hat{\y}_2,\cS(\x_2)\right)\leq{ H_f\tau\left(1+17M_{\cS}^2\right)} \theta^2\|\x_1-\x_2\|^2$/2. We finished our proof for Claim \ref{claim:barytheta}.
        \end{proof}
    % We conclude that \eqref{eq:distbary} holds with $L_1={H_f\tau\left(1+17M_{\cS}^2\right)}/2$. Claim \ref{claim:barytheta} is proved.
	
	\subsection{Step 3: Define approximation points} 
	With the preparations in Steps~1–2, we now define the approximation points
\[
\y_1 \coloneqq \Pi_{\cS(\x_1)}(\hat{\y}_1), \qquad
\y_2 \coloneqq \Pi_{\cS(\x_2)}(\hat{\y}_2).
\]
To establish the smoothness of $\cS$, it remains to show that there exists a constant $L_{\cS}>0$ such that
	\begin{equation}\label{eq:desire1}
		\|\theta\y_1+(1-\theta)\y_2-\y\|\leq\frac{L_\cS}2\theta(1-\theta)\|\x_1-\x_2\|^2;
	\end{equation}  
	\begin{equation}\label{eq:desire2}
		\|\y_1-\y_2\|^2\leq  {L_\cS}\|\x_1-\x_2\|^2.
	\end{equation}
	We first prove the inequality \eqref{eq:desire1}.
% {\color{red}{
%     By the triangle inequality, we have
% 	\[ \|\theta\y_1+(1-\theta)\y_2-\y\|
% 	\leq \|\theta\hat{\y}_1+(1-\theta)\hat{\y}_2-\y\|+\theta\|\y_1-\hat{\y}_1\|+(1-\theta)\|\y_2-\hat{\y}_2\|.\]}}
We have 
	\begin{equation*}
		\begin{aligned}
			&\|\theta\y_1+(1-\theta)\y_2-\y\|\\
            = & \|\theta(\y_1-\hat\y_1)+(1-\theta)(\y_2-\hat\y_2) +(\theta \hat \y_1+(1-\theta)\hat\y_2-\y)\|\\
			% \leq&\left\|\theta\bar{\y}_1+(1-\theta)\bar{\y}_2+\left(\y-\hat{\y}\right)-\y\right\|+\theta \dist(\hat{\y}_1,\cS(\x_1))+(1-\theta)\dist(\hat{\y}_2,\cS(\x_2)) \\
            \leq & \theta \|\y_1-\hat\y_1\| + (1-\theta)\|\y_1-\hat\y_1\|  +\|(\theta \hat \y_1+(1-\theta)\hat\y_2-\y)\|\\
			=&\theta \dist(\hat{\y}_1,\cS(\x_1))+(1-\theta)\dist(\hat{\y}_2,\cS(\x_2)) +\left\|\bar{\y}^{\theta}-\hat{\y}\right\|\\
=&\theta \dist(\hat{\y}_1,\cS(\x_1))+(1-\theta)\dist(\hat{\y}_2,\cS(\x_2))+\dist\left(\bar{\y}^{\theta},\cS\left(\x^{\theta}\right)\right),
		\end{aligned}
	\end{equation*}
    where the second equality follows from ${\y}_i=\Pi_{\cS(\x_i)}(\hat{\y}_i)$ and $\hat{\y}_i=\bar{\y}_i+\y-\hat{\y}$  for $i=1,2$; the final one is due to $\hat{\y}=\Pi_{\cS(\x^{\theta})}(\bar{\y}^{\theta})$.

	This, together with \eqref{eq:L0} and \eqref{eq:distbary}, implies
	\[	\|\theta\y_1+(1-\theta)\y_2-\y\|\leq(L_0+L_1)\theta(1-\theta)\|\x_1-\x_2\|^2.\]
	Hence, \eqref{eq:desire1} holds if ${L_\cS}\geq 2(L_0+L_1)$. It is left to show \eqref{eq:desire2}. 
	
	To prove \eqref{eq:desire2}, we estimate the distance $\|\y_1-\y_2\|$. To begin, we apply the triangle inequality to obtain
	\[ 	\|\y_1-\y_2\|\leq 	\|\y_1-\bar{\y}_1\|+\|\bar{\y}_1-\bar{\y}_2\|+\|\bar{\y}_2-\y_2 \|.\]
	Then, use the definition ${\y}_i=\Pi_{\cS(\x_i)}(\hat{\y}_i)=\Pi_{\cS(\x_i)}(\bar{\y}_i+\y-\hat{\y})$ and notice $\bar{\y}_i\in\cS(\x_i)$  for $i=1,2$. We have
	\begin{equation}\label{eq:tildey}
		\begin{aligned}
			\|\y_1-\y_2\|
			&\leq\left\|\Pi_{\cS(\x_1)}(\bar{\y}_1+(\y-\hat{\y}))-\Pi_{\cS(\x_1)}(\bar{\y}_1))\right\|+\|\bar{\y}_1-\bar{\y}_2\|\\
			&\quad\ +\left\|\Pi_{\cS(\x_2)}(\bar{\y}_2))-\Pi_{\cS(\x_2)}(\bar{\y}_2+(\y-\hat{\y}))\right\|\\
			&\leq \left\|\hat{\y}-\y\right\|+\|\bar{\y}_1-\bar{\y}_2\|+\left\|\hat{\y}-\y\right\|\\
			&=2\left\|\hat{\y}-\y\right\|+\|\bar{\y}_1-\bar{\y}_2\|,
		\end{aligned}
	\end{equation}
	where the second inequality follows from the non-expansiveness of the projectors $\Pi_{\cS(\x_1)}(\cdot)$ and $\Pi_{\cS(\x_2)}(\cdot)$
	
Recall from \eqref{eq:y1y2} that $\|\bar{\y}_1-\bar{\y}_2\|\le M_{\cS}\|\x_1-\x_2\|$, and note that \eqref{eq:barytheta} further implies
	\begin{equation}\label{eq:hatyy}
		\left\|\hat{\y}-\y\right\|\leq 2\theta(1-\theta)M_{\cS}\|\x_1-\x_2\|\leq\frac12M_{\cS}\|\x_1-\x_2\|.
	\end{equation}
	Combining \eqref{eq:tildey} and \eqref{eq:hatyy} yields  
	\[\|\y_1-\y_2\|\leq2M_{\cS}\|\x_1-\x_2\|, \]
	which proves \eqref{eq:desire2} with ${L_\cS}\geq 4M_{\cS}^2$. 
	
Finally, to ensure \eqref{eq:desire1} it suffices to choose
\[
L_{\cS}\;\ge\;\max\{\,2(L_0+L_1),\,4M_{\cS}^2\,\}.
\]
Recalling $M_{\cS}=L_f\tau$, $L_0=\tfrac{H_f\tau}{2}(1+M_{\cS}^2)$, and
$L_1=\tfrac{H_f\tau}{2}(1+17M_{\cS}^2)$, we obtain
\[
L_{\cS}
=\max\Big\{\,2H_f\tau\big(1+9L_f^2\tau^2\big),\;4L_f^2\tau^2\,\Big\}.
\]
This completes the proof of Theorem \ref{pro:Swc}.

	\subsection{Proof of Remark \ref{re:Pro2bounded}}
	In the proof of Theorem \ref{pro:Swc}, Assumption \ref{assum:basic} is only used to guarantee Lemma \ref{le:Lipschitz} and the estimations on the involved points. Notice that to ensure Lemma \ref{le:Lipschitz}, it suffices for Assumption \ref{assum:basic} to hold on the set $\cD\times{\rm Conv}(\bigcup_{\x\in\cD}\cS(\x))\subseteq\cD\times\cY$. We only need to check that the involved points belong to the set $\cD\times\cY$ to prove Remark \ref{re:Pro2bounded}.
	
	As the domain of $\cS$ is $\cD$, to verify the set smoothness of $\cS$, we can choose $\x_1,\x_2\in\cD$. Then, we have $\x^{\theta}\in\cD$ due to the convexity of $\cD$. Hence, it suffices to check that $\bar\y_1$, $\bar\y_2$, $\bar{\y}^{\theta}$,  $\y_1$, $\y_2$, $\hat{\y}$, $\hat{\y}_1$, and $\hat\y_2$ belong to the convex set $\cY$. 
	
	First, recall that $\bar\y_1,\y_1\in\cS(\x_1),\bar\y_2,\y_2\in\cS(\x_2)$, $\hat{\y}\in\cS(\x^{\theta})$, and $\bar{\y}^{\theta}\in {\rm Conv}(\cS(\x_1)\cup\cS(\x_2))\subseteq {\rm Conv}(\bigcup_{\x\in\cD}\cS(\x))$ due to their definitions. We see that they belong to the set $\cY$. We then focus on $\hat{\y}_1$ and $\hat\y_2$. Recall that $\hat\y_i=\bar{\y}_i+(\y-\hat{\y})$ for $i=1,2$. We see that for $i=1,2$,
	\[\dist\left(\hat\y_i,\cS(\x_i)\right)\leq\|\hat\y_i-\bar{\y}_i \|=\left\|\y-\hat{\y}\right\|. \]
	By \eqref{eq:hatyy} and $\|\x_1-\x_2\|\leq{\rm diam}(\cD)$, for $i=1,2$, we further have
	\[ \dist\left(\hat\y_i,\cS(\x_i)\right)\leq\frac12M_{\cS}\cdot{\rm diam}(\cD).\]
	This implies that $\hat{\y}_i\in\cS(\x_i)+\frac12M_{\cS}\cdot{\rm diam}(\cD)\cdot\B(\bz,1)\subseteq\cY$ for $i=1,2$. We complete the proof.
	
	\section{Proof of Theorem \ref{th:compute} }
	We first introduce some background on zeroth-order methods before the formal proof.
	Let $\P$ denote the uniform distribution on the unit sphere in $\R^m$. Given a function $g:\R^m\to\R$ and a radius $\varepsilon>0$, we define the randomized smooth approximation $g$ by $g^{\varepsilon}(\x)\coloneqq \E_{\u\sim\P}[g(\x+\varepsilon\u)]$. We have the following properties for $g^{\varepsilon}$. 
	\vspace{1em}
	\begin{lemma}[Basic Properties of Randomized Smoothing]\label{le:gaussian} The following hold:
		
		\begin{enumerate}[label={{\rm (\roman*)}}]
			\item If $g$ is $M_g$-Lipschitz continuous, then $g^{\varepsilon}$ is differentiable, $M_g$-Lipschitz continuous, and satisfies
			\begin{equation}\label{eq:g_approx}
				\left|g(\x)-g^{\varepsilon}(\x)\right|\leq \varepsilon M_g.
			\end{equation}
			\item If $g$ is $r$-weakly convex (resp. concave), then $g^{\varepsilon}$ is $r$-weakly convex (resp. concave).
		\end{enumerate}
	\end{lemma}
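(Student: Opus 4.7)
The plan is to treat the two parts separately, as both are standard facts about randomized smoothing on the unit ball, and to handle the weakly concave case in (ii) by reduction to the weakly convex one.

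For part (i), I would first derive the Lipschitz continuity and the approximation bound directly from the definition and Jensen's inequality:
\begin{equation*}
|g^{\delta}(\x_1)-g^{\delta}(\x_2)| \le \E_{\u\sim \P}\bigl[|g(\x_1+\delta\u)-g(\x_2+\delta\u)|\bigr]\le M_g\|\x_1-\x_2\|,
\end{equation*}
and
\begin{equation*}
|g(\x)-g^{\delta}(\x)|\le \E_{\u\sim \P}\bigl[|g(\x+\delta\u)-g(\x)|\bigr]\le M_g\delta\,\E\|\u\|\le M_g\delta.
\end{equation*}
The differentiability of $g^{\delta}$ is the only nontrivial component. I would express $g^{\delta}$ as the convolution of $g$ with the normalized indicator of $\B_m(\bz,\delta)$, which has compact support, and invoke the divergence theorem / change-of-variables argument (as in Flaxman--Kalai--McMahan) to rewrite the gradient as a surface integral, yielding $\nabla g^{\delta}(\x)=(m/\delta)\,\E_{\v\sim \mathrm{Unif}(\mathbb{S}^{m-1})}[g(\x+\delta\v)\v]$. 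This representation is well-defined because $g$ is continuous, and it immediately gives differentiability. I expect this to be the main technical obstacle; I would cite standard references from the zeroth-order literature rather than re-derive the identity in full.

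For part (ii), I would reduce to part of a standard argument: if $g$ is $r$-weakly convex, set $h(\x)\coloneqq g(\x)+\tfrac{r}{2}\|\x\|^2$, which is convex by definition. Then $h^{\delta}(\x)=\E_{\u\sim \P}[h(\x+\delta\u)]$ is convex as an expectation of convex functions in $\x$. Expanding and using that $\P$ is symmetric on $\B_m(\bz,1)$ so $\E[\u]=\bz$, I obtain
\begin{equation*}
h^{\delta}(\x)=g^{\delta}(\x)+\frac{r}{2}\|\x\|^2+\frac{r}{2}\delta^2\,\E\|\u\|^2,
\end{equation*}
so $g^{\delta}(\x)+\tfrac{r}{2}\|\x\|^2$ equals a convex function minus a constant, hence is convex; this is precisely the $r$-weak convexity of $g^{\delta}$. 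The weakly concave case follows by applying the weakly convex case to $-g$ and noting that $(-g)^{\delta}=-g^{\delta}$. Apart from the differentiability claim in (i), all steps are routine manipulations of the definition combined with Jensen's inequality and the symmetry of $\P$.
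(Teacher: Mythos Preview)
Your proposal is correct and follows essentially the same approach as the paper. The paper's proof of (i) simply cites standard references (Lin et al., Yousefian et al., Duchi et al.) for all three claims, while for (ii) it cites Nazari et al.\ for the weakly convex case and then gives exactly the same $(-g)^{\delta}=-g^{\delta}$ reduction you propose for the concave case; your self-contained arguments are precisely the content behind those citations, so the routes coincide mathematically even though you spell out what the paper defers to the literature.
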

	\begin{proof}
		(i) See \cite[Proposition 2.3]{lin2022gradient}.
		% Under the $L$-Lipschitz continuity of $g$, see \cite[Lemma 2]{nesterov2017random} for the continuous differentiability of $g^{\varepsilon}$, \cite[Page 533]{nesterov2017random} for the $L$-Lipschitz continuity of $g^{\varepsilon}$, and \citet[Theorem 1]{nesterov2017random} for the approximation inequality \eqref{eq:g_approx}.
		
		(ii) If $g$ is $r$-weakly convex, then by the same arguments of \citet[Lemma 16]{nazari2020adaptive}, we have the $r$-weak convexity of $g^{\varepsilon}$. When $g$ is $r$-weakly concave, we note that $-g$ is $r$-weakly convex, and hence $(-g)^{\varepsilon}$ is $r$-weakly convex. Due to the simple fact $(-g)^{\varepsilon}=-g^{\varepsilon}$, we have the $r$-weak convexity of $-g^{\varepsilon}$, i.e., the $r$-weak concavity of $g^{\varepsilon}$.
	\end{proof}
	As the approximation for a subdifferental $\partial g$ via randomized smoothing can be inexact, we need the following $\epsilon$-subdifferential.
	\vspace{1em}
	\begin{definition}\label{def:epsilon_sub}
		Consider a convex function $g:\R^m\to\R$ and a scalar $\nu\geq0$. We define the $\nu$-subdifferential of $g$ at $\x\in\R^m$ by
		\[\partial_{\nu} g(\x)=\left\{\s\in\R^m:g(\z)\geq g(\x)+\s^T(\z-\x)-\nu,\ \forall\ \z\in\R^m\right\}.\]
	\end{definition}
	We then develop the convergence rates of IZOM for optimistic and pessimistic BLO, respectively. We remark that our analysis remains valid when the hyper-objective functions $\varphi_o$ (resp. $\varphi_p$) are replaced with general $M$-Lipschitz continuous, $\rho$-weakly concave (resp. convex) functions.

	\subsection{Optimistic Case}\label{subsec:optimistic}
	To begin, we record a celebrated proposition on subdifferential transportation.
	\vspace{1em}
	\begin{proposition}\label{pro:trans}(cf. \cite[Theorem 5.5]{atenas2023unified} and \cite[Theorem 2]{robinson1999linear})
		Let $g:\R^m\to\R$ be a proper lower semicontinuous convex function. Suppose that $\nu\geq0$ and $G\in\partial_{\nu}g(\x)$. Then, for each $r>0$, there is a unique vector $\v\in\R^m$ such that 
		\[G-\frac{1}{r}\v\in\partial g(\x+r\v),\  \|\v\|\leq \sqrt{\nu}.\]
	\end{proposition}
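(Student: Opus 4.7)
}
My plan is to prove existence, uniqueness, and the bound in one shot by constructing an auxiliary strongly convex problem whose minimizer encodes $\v$. Specifically, I will introduce the function
\[
h(\z) \coloneqq g(\z) - G^\top\z + \frac{1}{2r^2}\|\z - \x\|^2,
\]
and let $\bar\z$ be its minimizer. I will then define $\v \coloneqq (\bar\z - \x)/r$ and verify the two required properties.

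First, I would establish that $h$ admits a unique minimizer. Since $g$ is proper, lsc, and convex, and the quadratic term is strictly convex with Hessian $\tfrac{1}{r^2}I$, the function $h$ is proper, lsc, and $\tfrac{1}{r^2}$-strongly convex. Coercivity of $h$ follows from the $\varepsilon$-subgradient inequality $g(\z) \geq g(\x) + G^\top(\z-\x) - \varepsilon$, which yields $h(\z) \geq g(\x) - G^\top\x - \varepsilon + \tfrac{1}{2r^2}\|\z-\x\|^2$. Existence and uniqueness of $\bar\z$ are immediate, which in turn gives uniqueness of $\v$.

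Second, the inclusion follows from Fermat's rule applied at $\bar\z$. Since $\bar\z$ minimizes $h$, we have $0 \in \partial g(\bar\z) - G + \tfrac{1}{r^2}(\bar\z - \x)$, i.e., $G - \tfrac{1}{r^2}(\bar\z - \x) \in \partial g(\bar\z)$. Rewriting with $\bar\z = \x + r\v$, this is precisely $G - \tfrac{1}{r}\v \in \partial g(\x + r\v)$.

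The crux of the proof is deriving the sharp bound $\|\v\| \leq \sqrt{\varepsilon}$ rather than the looser $\sqrt{2\varepsilon}$. The trick will be to use two inequalities simultaneously. The strong convexity of $h$ applied at the pair $(\x, \bar\z)$ yields
\[
h(\x) - h(\bar\z) \geq \frac{1}{2r^2}\|\x - \bar\z\|^2.
\]
On the other hand, expanding $h(\bar\z)$ and invoking the $\varepsilon$-subgradient property $g(\bar\z) - G^\top\bar\z \geq g(\x) - G^\top\x - \varepsilon$, I obtain
\[
h(\bar\z) \geq h(\x) - \varepsilon + \frac{1}{2r^2}\|\bar\z - \x\|^2.
\]
Adding these two inequalities cancels $h(\x) - h(\bar\z)$ and combines \emph{both} quadratic terms on one side, yielding $\tfrac{1}{r^2}\|\bar\z - \x\|^2 \leq \varepsilon$, hence $\|\v\| = \|\bar\z - \x\|/r \leq \sqrt{\varepsilon}$. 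The main subtlety—and really the only one—is noticing that the quadratic bound appears twice (once from strong convexity, once from the minimization inequality applied in the opposite direction), which is what saves the factor of $\sqrt{2}$; a naive application of just one of the inequalities would fall short of the stated constant.
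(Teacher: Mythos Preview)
The paper does not provide its own proof of this proposition; it is stated with a citation to \cite[Theorem 5.5]{atenas2023unified} and \cite[Theorem 2]{robinson1999linear} and used as a black box in the proof of Lemma~\ref{le:trans}. Your argument is correct and is in fact the standard proof of Robinson's sharp Br{\o}ndsted--Rockafellar theorem: one minimizes the strongly convex auxiliary function $h(\z)=g(\z)-G^\top\z+\tfrac{1}{2r^2}\|\z-\x\|^2$, reads off the inclusion from Fermat's rule, and obtains the bound $\|\v\|\le\sqrt{\varepsilon}$ by combining the strong-convexity lower bound at the minimizer with the $\varepsilon$-subgradient inequality. The one point you leave slightly implicit is that \emph{any} $\v$ satisfying $G-\tfrac{1}{r}\v\in\partial g(\x+r\v)$ forces $\x+r\v$ to be a stationary point of $h$ and hence its unique minimizer; this is what actually yields uniqueness of $\v$, not merely uniqueness of the construction. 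Otherwise the proof is complete and matches the cited literature.
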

	Proposition \ref{pro:trans} plays an important role in relating an $\epsilon$-subdifferential to the Clarke subdifferential at a near point, leading to the following lemma.
	%Given this proposition, by showing that $\nabla\phi_{o}^{\varepsilon}(\x)$ is a $\varepsilon$-subdifferential of $\phi$ and $\bar\phi(\x)\coloneqq\frac{\rho}{2}\|\x\|^2-\phi(\x)$ is convex, it is guaranteed that gradient $\nabla\phi^{\varepsilon}(\x)$ approximately belongs to $\partial \phi(\x+\v)$ for some $\v\in\B(\bz,\sqrt{E})$. Then, the estimation \eqref{eq:phi_delta} holds and sufficient descent property of $\{\phi^{\varepsilon}(\x_t)\}_{t\in\N}$ leads to a convergence rate upon NAS measure, as stated by the following theorem.
	\begin{lemma}\label{le:trans}
		Let $g:\R^m\to\R$ be an  $M_g$-Lipschitz continuous and $\rho$-weakly concave function. Let $g^{\varepsilon}$ be the randomized approximation of $g$ with radius $\varepsilon>0$ and $\nu=2\varepsilon M_g$. Then,	for all $\x\in\R^m$, we have
		\begin{equation}\label{eq:phi_delta}
			\dist\left(\bz,{\textstyle \bigcup}_{\z\in\B(\x,\sqrt{\nu})}\partial g(\z)\right)\leq \left\|\nabla g^{\varepsilon}(\x)\right\|+(\rho+1)\sqrt{\nu}.
		\end{equation}
	\end{lemma}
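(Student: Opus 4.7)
The plan is to lift $g$ to a convex surrogate, interpret $\nabla g^{\delta}(\x)$ as an approximate subgradient of this surrogate, apply the Br{\o}ndsted--Rockafellar-type inclusion in Proposition~\ref{pro:trans}, and finally transport the resulting exact subgradient at a nearby point back through the Clarke subdifferential of $g$. Concretely, I would first set $\tilde g(\z) := -g(\z) + \tfrac{\rho}{2}\|\z\|^2$ and $\tilde h(\z) := -g^{\delta}(\z) + \tfrac{\rho}{2}\|\z\|^2$. Both are convex: the former by the $\rho$-weak concavity of $g$ (equivalently the $\rho$-weak convexity of $-g$), and the latter by the $\rho$-weak concavity of $g^{\delta}$ inherited from Lemma~\ref{le:gaussian}(ii). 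Since $\tilde h$ is differentiable, $G := \nabla \tilde h(\x) = -\nabla g^{\delta}(\x) + \rho\x$ belongs to $\partial \tilde h(\x)$.

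Next, I would certify $G$ as a $(2\delta M_g)$-subgradient of $\tilde g$ at $\x$. The uniform bound $|\tilde h - \tilde g| = |g^{\delta} - g| \le \delta M_g$ from Lemma~\ref{le:gaussian}(i), combined with the convex inequality $\tilde h(\z) \ge \tilde h(\x) + G^T(\z-\x)$, gives
\[
\tilde g(\z) \ge \tilde h(\z) - \delta M_g \ge \tilde h(\x) + G^T(\z-\x) - \delta M_g \ge \tilde g(\x) + G^T(\z-\x) - 2\delta M_g,
\]
so $G \in \partial_{2\delta M_g}\, \tilde g(\x)$ in the sense of Definition~\ref{def:epsilon_sub}. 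Applying Proposition~\ref{pro:trans} with $r=1$ then produces a vector $\v \in \R^m$ satisfying $\|\v\| \le \sqrt{2\delta M_g}$ and $G - \v \in \partial \tilde g(\x+\v)$.

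It remains to translate this inclusion back to $g$. Using $\partial(-g) = -\partial g$ for Clarke subdifferentials (as invoked in Fact~\ref{fact:relative}) together with the Clarke sum rule, I get $\partial \tilde g(\z) = -\partial g(\z) + \rho \z$. Substituting $G = -\nabla g^{\delta}(\x) + \rho\x$ and simplifying yields $\nabla g^{\delta}(\x) + (1+\rho)\v \in \partial g(\x+\v)$. Since $\|\v\| \le \sqrt{2\delta M_g} \le \sqrt{\varepsilon}$ and the generous choice $\varepsilon = 2\delta M_g \sqrt{m}$ ensures $\x+\v \in \B_m(\x,\varepsilon)$, taking norms delivers
\[
\dist\!\left(\bz,{\textstyle\bigcup}_{\z\in\B_m(\x,\varepsilon)}\partial g(\z)\right) \le \|\nabla g^{\delta}(\x) + (1+\rho)\v\| \le \|\nabla g^{\delta}(\x)\| + (\rho+1)\sqrt{\varepsilon},
\]
which is the desired bound.

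The main obstacle is the subdifferential bookkeeping in the transport step: Proposition~\ref{pro:trans} is stated for convex subdifferentials, while the conclusion must hold for the Clarke subdifferential of the nonconvex function $g$. One must therefore verify that $\partial(-g) = -\partial g$ and the Clarke sum rule both apply so that the element of $\partial \tilde g(\x+\v)$ produced by the proposition simplifies cleanly to the explicit form $\nabla g^{\delta}(\x) + (1+\rho)\v$ inside $\partial g(\x+\v)$; moreover, one must check that $g^{\delta}$ really inherits $\rho$-weak concavity so that $\tilde h$ is truly convex. Both are delivered by the preliminaries, but they are the conceptual crux of the argument.
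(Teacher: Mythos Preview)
Your proposal is correct and follows essentially the same argument as the paper: lift to the convex function $\tfrac{\rho}{2}\|\cdot\|^2 - g$, certify $\rho\x - \nabla g^{\delta}(\x)$ as an approximate subgradient of this lift via Lemma~\ref{le:gaussian}, apply Proposition~\ref{pro:trans} with $r=1$, and then undo the lift using $\partial(-g)=-\partial g$ and the Clarke sum rule to obtain $\nabla g^{\delta}(\x) + (\rho+1)\v \in \partial g(\x+\v)$ with $\|\v\|\le\sqrt{\varepsilon}$. The only cosmetic difference is that you take the tighter $\varepsilon$-subgradient parameter $2\delta M_g$ (matching Lemma~\ref{le:gaussian}(i) as stated) where the paper writes $2\delta M_g\sqrt{m}$ directly; both yield $\|\v\|\le\sqrt{\varepsilon}$.
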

	\begin{proof}[Proof of Lemma \ref{le:trans}]
		By Lemma \ref{le:gaussian} (ii), $g^{\varepsilon}$ is $\rho$-weakly concave, i.e., the function $\x\mapsto\frac{\rho}{2}\|\x\|^2-g^{\varepsilon}(\x)$ is convex. Then, we know that for all $\z,\x\in\R^m$,
		\begin{equation*}
			\frac{\rho}{2}\|\z\|^2-g^{\varepsilon}(\z)\geq \frac{\rho}{2}\|\x\|^2-g^{\varepsilon}(\x)+\left(\rho\x-\nabla g^{\varepsilon}(\x)\right)^T(\z-\x).
		\end{equation*}
		This, together with Lemma \ref{le:gaussian} (i), implies
		\begin{equation*}
			\frac{\rho}{2}\|\z\|^2-g(\z)\geq \frac{\rho}{2}\|\x\|^2-g(\x)+\left(\rho\x-\nabla g^{\varepsilon}(\x)\right)^T(\z-\x)-2\varepsilon M_g,
		\end{equation*}
		which is equivalent to
		\begin{equation*}
			\rho\x-\nabla g^{\varepsilon}(\x)\in \partial_{\nu}\left(\frac{\rho}{2}\|\x\|^2-g(\x)\right)\quad\text{ with }\quad\nu=2\varepsilon M_{g}.
		\end{equation*}
		For simplicity, we define $\bar{g}:\x\mapsto\frac{\rho}{2}\|\x\|^2-g(\x)$.
		Clearly, $\bar{g}$ is convex due to the $\rho$-weak concavity of $g$. Applying Proposition \ref{pro:trans}, we see that there exists $\v\in\R^m$ with $\|\v\|\leq \sqrt{\nu}$ such that 
		\begin{equation}\label{eq:barphi}
			\rho\x-\nabla g^{\varepsilon}(\x)-\v\in\partial \bar{g}(\x+\v).
		\end{equation}
		Recall that $-g$ is $\rho$-weakly convex, and thus is regular according to \cite[Proposition 4.5]{vial1983strong}. Then, we have $\partial \bar{g}(\z)=\rho\z+\partial(-g)(\z)$ for all $\z\in\R^m$ by \cite[Corollary 3 of Proposition 2.3.3]{clarke1990optimization}. On the other hand, we have $\partial (-g)=-\partial g$ by \cite[Proposition 2.3.1]{clarke1990optimization} and Lipschitz continuity of $g$. Hence, we see that 
		\[\partial \bar{g}(\z)=\rho\z-\partial g(\z),\qquad \forall\ \z\in\R^m. \]
		In particular, we have
		\[\partial \bar{g}(\x+\v)= \rho(\x+\v)-\partial g(\x+\v).\]
		This, together with \eqref{eq:barphi}, implies
		\begin{equation*}
			\nabla g^{\varepsilon}(\x)+(\rho+1)\v\in \partial g(\x+\v).
		\end{equation*}
		It follows that 
		\[\dist(\bz,\partial g(\x+\v))\leq \left\|\nabla g^{\varepsilon}(\x)+(\rho+1)\v \right\|\leq \left\|\nabla g^{\varepsilon}(\x)\right\|+(\rho+1)\|\v\|.\]
		Observe that $\|\v\|\leq\sqrt{\nu}$ and  
		\[\dist\left(\bz,\partial g(\x+\v)\right)\geq \dist\left(\bz,{\textstyle \bigcup}_{\z\in\B(\x,\|\v\|)}\partial g(\z)\right)\geq \dist\left(\bz,{\textstyle \bigcup}_{\z\in\B(\x,\sqrt{\nu})}\partial g(\z)\right).\]
		We obtain the desired inequality
		\[\dist\left(\bz,{\textstyle \bigcup}_{\z\in\B(\x,\sqrt{\nu})}\partial g(\z)\right)\leq \left\|\nabla g^{\varepsilon}(\x)\right\|+(\rho+1)\sqrt{\nu}.\]
	\end{proof}
	
	Now, we are ready to prove the convergence rate for IZOM. To begin, we define the subdifferential approximation function $G$ by
	\[G(\x_t)=\frac{m}{2\varepsilon}(\varphi_o(\x_t+\varepsilon\u_t)-\varphi_o(\x_t-\varepsilon\u_t) )\u_t.\]
	By \cite[Lemma D.1]{lin2022gradient}, it holds that
	\begin{equation}\label{eq:G_1}
		\E[G(\x_t)|\x_t]=\nabla  \varphi_o^{\varepsilon}(\x_t);\quad \E[\|G(\x_t)\|^2|\x_t]\leq 16\sqrt{2\pi}m M_{\varphi}^2.
	\end{equation}
	Since $|\tilde{\varphi}(\x_t)-\varphi_o(\x_t)|\leq w$ by the subroutine $\cA$, we have $\|\tilde{G}(\x_t)-{G}(\x_t)\|\leq \frac{mw}{\varepsilon}$. This, together with the simple fact $\|\tilde{G}(\x_t)\|^2\leq 2\|\tilde{G}(\x_t)-{G}(\x_t)\|^2+2\|G(\x_t)\|^2$ due to the Cauchy inequality, implies
	\begin{equation}\label{eq:G_2}
		\E[\|\tilde{G}(\x_t)-{G}(\x_t)\|^2|\x_t]\leq \left(\frac{mw}{\varepsilon}\right)^2;\quad \E[\|\tilde{G}(\x_t)\|^2|\x_t]\leq 32\sqrt{2\pi}m M_{\varphi}^2+2\left(\frac{mw}{\varepsilon}\right)^2.
	\end{equation}
	Next, we combine the update of IZOM and $\rho$-weak concavity of $\varphi^{\varepsilon}_o$
	to develop a sufficient decrease property for the $t$-th iteration. Using Fact \ref{fact:relative} and the update $\x_{t+1}-\x_t=-\eta \tilde{G}(\x_t)$ of Algorithm \ref{al:1}, we obtain the following estimate:
	\[\begin{aligned}
		&\varphi_o^{\varepsilon}(\x_{t+1})\\
		\leq& \varphi_o^{\varepsilon}(\x_{t})+\nabla \varphi_o^{\varepsilon}(\x_{t})^T(\x_{t+1}-\x_t)+\frac{\rho}{2}\|\x_{t+1}-\x_t\|^2\\
		=& \varphi_o^{\varepsilon}(\x_{t})-\eta\nabla \varphi_o^{\varepsilon}(\x_t)^T\tilde{G}(\x_t)+\frac{\rho}{2}\eta^2 \|\tilde{G}(\x_t)\|^2\\
		=&\varphi_o^{\varepsilon}(\x_{t})-\eta\nabla \varphi_o^{\varepsilon}(\x_t)^T{G}(\x_t)-\eta\nabla \varphi_o^{\varepsilon}(\x_t)^T\left(\tilde{G}(\x_t)-{G}(\x_t)\right)+\frac{\rho}{2}\eta^2 \|\tilde{G}(\x_t)\|^2\\
		\leq& \varphi_o^{\varepsilon}(\x_{t})-\eta\nabla \varphi_o^{\varepsilon}(\x_t)^T{G}(\x_t)+\frac{\eta}2\|\nabla \varphi_o^{\varepsilon}(\x_t)\|^2+\frac{\eta}2\left\|\tilde{G}(\x_t)-{G}(\x_t)\right\|^2+\frac{\rho}{2}\eta^2 \|\tilde{G}(\x_t)\|^2.\\
	\end{aligned}
	\]
	where the last inequality is due to the Cauchy inequality. 
	
	We take expectation conditioning on $\x_t$ for the above inequality. Recall that $\E[{G}(\x_t)|\x_t]=\nabla \varphi_o^{\varepsilon}(\x_t)$ by \eqref{eq:G_1}. We see that
	\begin{equation*}
		\begin{aligned}
			\E[\varphi_o^{\varepsilon}(\x_{t+1})|\x_t]&\leq \varphi_o^{\varepsilon}(\x_{t})-\frac{\eta}2\|\nabla \varphi_o^{\varepsilon}(\x_t)\|^2+\frac{\eta}2\E[\|\tilde{G}(\x_t)-{G}(\x_t)\|^2|\x_t]  \\
			&\quad +\frac{\rho\eta^2}{2}\E[\|\tilde{G}(\x_t)\|^2|\x_t].
		\end{aligned}
	\end{equation*} 
	Apply Lemma \ref{le:trans} to $\varphi_o$ and use the Cauchy inequality. We obtain
	\[\dist\left(\bz,{\textstyle \bigcup}_{\z\in\B\left(\x_t,\sqrt{\nu}\right)}\partial \varphi_o(\z)\right)^2\leq 2\left\|\nabla\varphi_o^{\varepsilon}(\x_t)\right\|^2+2(\rho+1)^2\varepsilon. \]
	Combining the above two inequalities with \eqref{eq:G_2}, we have
	\begin{equation*}
		\begin{aligned}
			\E[\varphi_o^{\varepsilon}(\x_{t+1})|\x_t]&\leq \varphi_o^{\varepsilon}(\x_{t})-\frac{\eta}4\dist\left(\bz,{\textstyle \bigcup}_{\z\in\B\left(\x_t,\sqrt{\nu}\right)}\partial \varphi_o(\z)\right)^2+\frac{\eta}{2}(\rho+1)^2\varepsilon\\
			&\quad+\frac{\eta}2\left(\frac{mw}{\varepsilon}\right)^2+\frac{\rho}{2}\eta^2 \left( 32\sqrt{2\pi}m M_{\varphi}^2+2\left(\frac{mw}{\varepsilon}\right)^2\right).
		\end{aligned}
	\end{equation*}
	Recall that $\nu=2\varepsilon M_{\varphi}$, $\eta=\Theta(\frac1{\sqrt{mT}}), \varepsilon=\cO(\frac1{\sqrt{T}})$, and $ w=\cO(\frac1{m^{\frac34}T^{\frac34}})$. Ignoring some scalars, we further have
	\begin{equation} \label{eq:2T}
		\E[\varphi_o^{\varepsilon}(\x_{t+1})|\x_t]\leq \varphi_o^{\varepsilon}(\x_{t})-\frac{\eta}4\dist\left(\bz,{\textstyle \bigcup}_{\z\in\B\left(\x_t,\sqrt{\nu}\right)}\partial \varphi_o(\z)\right)^2+\cO\left(\frac1T\right).
	\end{equation}
	Summing \eqref{eq:2T} over $t=0,1,\ldots T-1$ and taking full expectation, we obtain
	\[\E[\varphi_o^{\varepsilon}(\x_{T})]\leq\varphi_o^{\varepsilon}(\x_{0})-\frac{\eta}4\sum_{t=0}^{T-1}\E\left[\dist\left(\bz,{\textstyle \bigcup}_{\z\in\B\left(\x_t,\sqrt{\nu}\right)}\partial \varphi_o(\z)\right)^2\right]+\cO(1). \]
	Note that the definition of $\bar\x$ yields 
    \[\E\left[\dist\left(\bz,{\textstyle \bigcup}_{\z\in\B\left(\bar\x,\sqrt{\nu}\right)}\partial \varphi_o(\z)\right)^2\right]=\frac1T\sum_{t=0}^{T-1}\E\left[\dist\left(\bz,{\textstyle \bigcup}_{\z\in\B(\x_t,\sqrt{\nu})}\partial \varphi_o(\z)\right)^2\right].\] 
	On the other hand, by Lemma \ref{le:gaussian} (i) and $M_{\varphi}$-Lipschitz continuity of $\varphi_o$, 
	\[\varphi_o^{\varepsilon}(\x_{0})-\E[\varphi_o^{\varepsilon}(\x_{T})]\leq 2\varepsilon M_{\varphi}+\varphi_o(\x_0)-\min_{\x}\varphi_o(\x)=\Delta_o.\]
	
	Putting all the things together, we have
	\[\E\left[\dist\left(\bz,{\textstyle \bigcup}_{\z\in\B\left(\bar\x,\sqrt{\nu}\right)}\partial \varphi_o(\z)\right)^2\right]=\cO\left(\frac{\Delta_o+1}{\eta T}\right)=\cO\left(\frac{\sqrt{m}(\Delta_o+1)}{\sqrt{T}}\right), \]
	where $\sqrt{\nu}=\sqrt{2\varepsilon M_{\varphi}}=\cO(T^{-\frac14})$. 
	
	\subsection{Pessimistic Case }
	In this section, we let  $G(\x_t)=\frac{m}{2\varepsilon}(\varphi_p(\x_t+\varepsilon\u_t)-\varphi_p(\x_t-\varepsilon\u_t) )\u_t$ and use $\hat{\x}$ to denote $\prox_{\gamma,\varphi_p}(\x)$ for simplicity. Similar to the arguments on \eqref{eq:G_1}, \eqref{eq:G_2}, we have the following due to \cite[Lemma D.1]{lin2022gradient} and $|\tilde{\varphi}(\x_t)-\varphi_p(\x_t)|\leq w$ given by Algorithm \ref{al:1}:
	\begin{equation}\label{eq:G1}
		\E[G(\x_t)|\x_t]=\nabla  \varphi_p^{\varepsilon}(\x_t);\quad \E[\|G(\x_t)\|^2|\x_t]\leq 16\sqrt{2\pi}m M_{\varphi}^2.
	\end{equation}
	\begin{equation}\label{eq:G2}
		\E[\|\tilde{G}(\x_t)-{G}(\x_t)\|^2|\x_t]\leq \left(\frac{mw}{\varepsilon}\right)^2;\quad \E[\|\tilde{G}(\x_t)\|^2|\x_t]\leq 32\sqrt{2\pi}m M_{\varphi}^2+2\left(\frac{mw}{\varepsilon}\right)^2.
	\end{equation}
	Invoking the methodology of \citet{davis2019stochastic},
	we first estimate $\|\hat{\x}_t-\x_{t+1}\|$ for $t=0,1,\ldots,T-1$. To begin, the update of Algorithm \ref{al:1} and direct computation give the following estimate:
	\begin{equation}\label{eq:hatx}
		\begin{aligned}
			&\|\hat{\x}_t-\x_{t+1}\|^2\\
			=&\|\hat{\x}_t-\x_t+\eta \tilde{G}(\x_t)\| \\
			=&\|\hat{\x}_t-\x_t\|^2+\eta^2\|\tilde{G}(\x_t)\|^2+2\eta\tilde{G}(\x_t)^T(\hat{\x}_t-\x_t)\\		
			=& \|\hat{\x}_t-\x_t\|^2+\eta^2\|\tilde{G}(\x_t)\|^2+2\eta{G}(\x_t)^T(\hat{\x}_t-\x_t)+2\eta\left(\tilde{G}(\x_t)-{G}(\x_t)\right)^T(\hat{\x}_t-\x_t)\\
			\leq& \|\hat{\x}_t-\x_t\|^2+\eta^2\|\tilde{G}(\x_t)\|^2+2\eta{G}(\x_t)^T(\hat{\x}_t-\x_t)+\eta\|\tilde{G}(\x_t)-{G}(\x_t)\|^2+\eta \|\hat{\x}_t-\x_t\|^2.\\
		\end{aligned}
	\end{equation}
	Taking expectation in \eqref{eq:hatx} and using \eqref{eq:G1}, \eqref{eq:G2}, we obtain the following inequality:
	\begin{equation}\label{eq:hatx1}
		\begin{aligned}
			&\E[\|\hat{\x}_t-\x_{t+1}\|^2|\x_t]\\
			\leq &(1+\eta)\|\hat{\x}_t-\x_{t}\|^2+2\eta\nabla  \varphi_p^{\varepsilon}(\x_t)^T(\hat{\x}_t-\x_t)+\eta 	\E[\|\tilde{G}(\x_t)-{G}(\x_t)\|^2|\x_t]\\
			&\ +\eta^2\E[\|\tilde{G}(\x_t)\|^2|\x_t]\\
			\leq &(1+\eta)\|\hat{\x}_t-\x_{t}\|^2+2\eta\nabla  \varphi_p^{\varepsilon}(\x_t)^T(\hat{\x}_t-\x_t) +\eta\left(\frac{mw}{\varepsilon}\right)^2+32\sqrt{2\pi}m M_{\varphi}^2\eta^2+2\left(\frac{mw}{\varepsilon}\right)^2\eta^2.
		\end{aligned}
	\end{equation}
	We then turn to estimate $\varphi_{p,\gamma}(\x_{t+1})$. By the definition of $\varphi_{p,\gamma}$, we have \[\varphi_{p,\gamma}(\x_{t+1})\leq \varphi_p(\hat{\x}_t)+\frac{1}{2\gamma}\|\hat{\x}_t-\x_{t+1}\|^2.\]
	This, together with \eqref{eq:hatx1}, implies
	\begin{equation*}
		\begin{aligned}
			&\E[\varphi_{p,\gamma}(\x_{t+1})|\x_t]-\varphi_{p}(\hat{\x}_t)\\
			\leq&  \frac{1+\eta}{2\gamma}\|\hat{\x}_t-\x_{t}\|^2+\frac{\eta}{\gamma}\nabla  \varphi_p^{\varepsilon}(\x_t)^T(\hat{\x}_t-\x_t) +\frac{\eta}{2\gamma}\left(\frac{mw}{\varepsilon}\right)^2+16\sqrt{2\pi}m M_{\varphi}^2\frac{\eta^2}{\gamma}+\left(\frac{mw}{\varepsilon}\right)^2\frac{\eta^2}{\gamma}.
		\end{aligned}
	\end{equation*}
	Recall that $\varphi_p^{\varepsilon}$ is $\rho$-weakly convex by Theorem \ref{th:main} and Lemma \ref{le:gaussian} (ii). Then, using Lemma \ref{le:wc} (iii), we have
	\[\begin{aligned}
		\nabla  \varphi_p^{\varepsilon}(\x_t)^T(\hat{\x}_t-\x_t)&\leq \varphi_p^{\varepsilon}(\hat{\x}_t)-\varphi_p^{\varepsilon}(\x_t)+\frac{\rho}{2}\|\hat{\x}_t-\x_t\|^2\\
		&\leq \varphi_p(\hat{\x}_t)-\varphi_p(\x_t)+\frac{\rho}{2}\|\hat{\x}_t-\x_t\|^2+2\varepsilon M_{\varphi},
	\end{aligned}
	\]
	where the second inequality is due to Lemma \ref{le:gaussian} (i).
	
	Combining the above two estimates gives
	\begin{equation*}
		\begin{aligned}
			&\E[\varphi_{p,\gamma}(\x_{t+1})|\x_t]\\
			\leq &\varphi_{p}(\hat{\x}_t)+\frac{1}{2\gamma}\|\hat{\x}_t-\x_{t}\|^2+\frac{\eta}{\gamma}\left( \varphi_p(\hat{\x}_t)-\varphi_p(\x_t)+\frac{\rho+1}{2}\|\hat{\x}_t-\x_t\|^2\right)+\frac{2\eta}{\gamma}\varepsilon M_{\varphi}\\ 
			&\ +\frac{\eta}{2\gamma}\left(\frac{mw}{\varepsilon}\right)^2+16\sqrt{2\pi}m M_{\varphi}^2\frac{\eta^2}{\gamma}+\left(\frac{mw}{\varepsilon}\right)^2\frac{\eta^2}{\gamma} \\
			= &\varphi_{p,\gamma}(\x_{t})+ \frac{\eta}{\gamma}\left(\varphi_{p,\gamma}(\x_t)-\varphi_p(\x_t)+\frac{(\rho+1)\gamma-1}{2\gamma}\|\hat{\x}_t-\x_t\|^2\right)+\frac{2\eta}{\gamma}\varepsilon M_{\varphi}\\ 
			&\ +\frac{\eta}{2\gamma}\left(\frac{mw}{\varepsilon}\right)^2+16\sqrt{2\pi}m M_{\varphi}^2\frac{\eta^2}{\gamma}+\left(\frac{mw}{\varepsilon}\right)^2\frac{\eta^2}{\gamma}, \\
		\end{aligned}
	\end{equation*}
	where the equation uses $\varphi_{p,\gamma}(\x_t)=\varphi_p(\hat{\x}_t)+\frac{1}{2\gamma}\|\hat{\x}_t-\x_{t}\|^2$.
	Recall that by Lemma \ref{le:moreau}, it holds that $\x_t-\hat{\x}_t=\gamma\nabla \varphi_{p,\gamma}(\x_t)$ and \[\varphi_{p,\gamma}(\x_t)-\varphi_p(\x_t)\leq -\frac{1-\gamma\rho}{2\gamma}\|\hat{\x}_t-\x_{t}\|^2\leq -\frac{1-\gamma(\rho+1)}{2\gamma}\|\hat{\x}_t-\x_{t}\|^2.\] 
	We further have
	\begin{equation*}
		\begin{aligned}
			\E[\varphi_{p,\gamma}(\x_{t+1})|\x_t]&\leq\varphi_{p,\gamma}(\x_t)-\eta\left(1-\gamma(\rho+1)\right)\|\nabla \varphi_{p,\gamma}(\x_t)\|^2+\frac{2\eta}{\gamma}\varepsilon M_{\varphi}\\		
			&\quad +\frac{\eta}{2\gamma}\left(\frac{mw}{\varepsilon}\right)^2+16\sqrt{2\pi}m M_{\varphi}^2\frac{\eta^2}{\gamma}+\left(\frac{mw}{\varepsilon}\right)^2\frac{\eta^2}{\gamma}.
		\end{aligned}
	\end{equation*}
	Recall that $\eta=\Theta(\frac1{\sqrt{mT}}), \varepsilon=\cO(\frac1{\sqrt{T}})$, $ w=\cO(\frac1{m^{\frac34}T^{\frac34}})$, and $\gamma\in(0,1/(\rho+1))$. Neglecting some scalars, it follows that
	\begin{equation}\label{eq:1T}
		\E[\varphi_{p,\gamma}(\x_{t+1})|\x_t]\leq \varphi_{p,\gamma}(\x_t)-\eta\left(1-\gamma(\rho+1)\right)\|\nabla \varphi_{p,\gamma}(\x_t)\|^2+\cO\left(\frac1T \right).
	\end{equation}
	Summing \eqref{eq:1T} over $t=0,1,\ldots T-1$ and taking full expectation, we have
	\[\E[\varphi_{p,\gamma}(\x_T)]\leq\varphi_{p,\gamma}(\x_0)-\eta\left(1-\gamma(\rho+1)\right)\sum_{t=0}^{T-1}\E[\|\nabla \varphi_{p,\gamma}(\x_t)\|^2]+\cO(1). \]
	Note that the definition of $\bar\x$ gives $\sum_{t=0}^{T-1}\|\nabla \varphi_{p,\gamma}(\x_t)\|^2=T\E[\|\nabla \varphi_{p,\gamma}(\bar\x)\|^2]$. Also, notice that
	\[\varphi_{p,\gamma}(\x_0)-\E[\varphi_{p,\gamma}(\x_T)]\leq\varphi_{p,\gamma}(\x_0)-\min_{\x} \varphi_{p,\gamma}(\x)=\Delta_p. \]
	Putting all the things together, we obtain
	\[\E[\|\nabla \varphi_{p,\gamma}(\bar\x)\|^2]=\cO\left({\frac{\Delta_p+1}{\eta{T}}}\right)=\cO\left({\frac{\sqrt{m}(\Delta_p+1)}{\sqrt{T}}}\right).\]
	This completes the proof.

\end{document}